\newtheorem{prop}{Proposition}
\let\s\mathsf
\let\Cal\mathcal
\def\Frac#1#2{{\textstyle #1\over\textstyle #2}}%
\def\Frac#1#2{{\textstyle #1\over\textstyle #2}}%
\dedicatory{To the memory of Sasha Anan'in}
\title{Integral criteria of hyperbolicity
for graphs and groups}
\author{Victor Gerasimov}
\address{Victor Guerassimov (Gerasimov), Departamento de Matem\'atica,
Universidade Federal de Minas Gerais,
Av. Ant\^onio Carlos, 6627/
CEP: 31270-901. Caixa Postal: 702
Belo Horizonte, MG,
Brazil}
\email{victor@mat.ufmg.br}
\author{Leonid Potyagailo\\ \\ $\text{\today}$}
\address{ Leonid Potyagailo, UFR de Math\'ematiques, Universit\'e
de Lille, 59655 Villeneuve d'Ascq cedex, France}
\email{potyag@math.univ-lille1.fr}
\thanks{This work was
supported in part by the Labex CEMPI (ANR-11-LABX-0007-01) and by
the Brazilian-French Network in
Mathematics.}
\thanks{At the request of the publisher, the authors declare that
they have no relevant financial or non-financial interests to disclose.}
\subjclass[2020]{Primary 20F65, 20F67; Secondary 05C25}
\keywords{Hyperbolic graphs and groups, geodesic bigons, thinness almost surely}
\begin{document}
\begin{abstract}
We establish three criteria of hyperbolicity of a graph in terms of
``average width of geodesic bigons''.
In particular we prove that if the ratio of the Van Kampen area of a geodesic bigon $\beta$ and the length of $\beta$
in the Cayley graph of a finitely presented group $G$
is bounded above then $G$ is hyperbolic.


\end{abstract}
\date{\today}
\maketitle
\markboth{V.~Gerasimov, L.~Potyagailo, {\sl Integral criteria fo hyperbolicity}\quad(\today)}{V.~Gerasimov, L.~Potyagailo,
{\sl Integral criteria fo hyperbolicity}\quad(\today)}

\def\mpPath{}
\section{Introduction}
\subsection{Characteristic properties of hyperbolicity}
In 1987 M. Gromov introduced the definition of hyperbolic metric space \cite{Gr87}.

The hyperbolic metric spaces proved to be useful in many problems of geometry and group theory
and became one of the main objects of the geometrc group theory.

Subsequently many different characterizations of the hyperbolicity property were established.
Each of these properties can be taken as a definition of the hyperbolicity.

Sometimes a property characterizes the hyperbolicity only under certain restrictions on
a metric space. For example, it may be required that the metric is a path-metric
or geodesic.

There are also properties characterising \textit{hyperbolic graphs} within
the class of all connected graphs.

A \textit{graph} can be defined as a specific path-metric space
whose metric is either continuous or discrete,
with the values in the set $\Bbb N{=}\{0,1,2,\dots\}$ of positive integers.
In particular, P. Papasoglu proved
that the hyperbolicity of a connected graph follows from the
uniform thinness of the geodesic bigons \cite[Theorem 1.4]{Pap}.

In the current paper mixing the ideas of \cite{Pap} with ideas related to the
random walks on groups, we establish a new criterion
of hyperbolicity of a graph.
\subsection{Problems of checking hyperbolicity}
Despite the abundance of criteria of hyperbolicity,
sometimes to prove that a concrete space is hyperbolic remains a difficult
problem.
These difficulties motivate the search for new criteria.

The thin bigon criterion developed by P. Papasoglu turned out
to be useful. In particular it was applied
to show that
a group is hyperbolic if it is ``strongly geodesically automatic'' \cite{Pap}.

At the same time there are many cases when the hyperbolicity of a metric space remains a largely open problem.
\vskip3pt
1. The well-known conjecture atributed to Gersten \cite{Gers} states that a one-relator group
is hyperbolic if and only if it does not contain Baumslag-Solitar subgroups.\par
2. Does the \it Ancona property\rm\footnote{This means that there exists $\varepsilon >0$ such that for every geodesic segment $[x, z]$ and any $y\in (x,z)$ the probability that a random path from $x$ to $z$ passes through $y$ is greater \hfill\break than $\varepsilon$, see {\cite{GGPY}}.} of  a random walk
defined on the Cayley graph of
a finitely generated group $G$, imply that $G$ is hyperbolic?
A. Ancona used this property in his proof that the Martin boundary of a finitely supported random walk
on an arbitrary hyperbolic group $G$ coincides with the Gromov's boundary $\partial G$ \cite{Anc}?

The intriguing open question 2
is related to the problem of characterization of the class of hyperbolic groups
in terms of random walks (see \cite{GGPY} for generalizations for wider classes of groups).
Note that certain classes of groups (e.g. amenable groups) are known to possess such characterizations.

This problem was our main motivation  for the current paper.
\subsection{Integral criteria for hyperbolicity}
Many known characteristic properties of the hyperbolicity claim that
some function of several variables defined on a metric space is bounded above.
Here are few examples of such functions: $(a)$
the ``width'' (in some exact sense) of a geodesic
triangle (or of a geodesic bigon); $(b)$ the ratio of the ``area'' of a closed rectifiable path and
the length of the path.

Sometimes instead of boundness it is required another property of asymptotic behaviour of a function.

The variety of hyperbolicity criteria can be divided in ``pure geometric'',
such as boundness of the width of triangles or bigons; or ``integral'' which are given
by averagings of geometric functions.
An example of an integral one
is the well-known criterion of linearity (or subquadraticity) of the Dehn function.

Since the hyperbolicity  is a large scale property of a metric space, the integral and probabilistic criteria are useful. In the current paper we provide several ``integral'' generalizations of
the hyperbolicity criterion given in \cite[Theorem 1.4]{Pap},
averaging the width function of a geodesic bigon.

One of the corollaries of our results is the following (see Theorem 2 below): if the ratio $\frac{\displaystyle\mathsf{area}(\beta)}{\displaystyle|\beta|}$ is bounded above then the group is hyperbolic.
Here $\beta$ ranges over the set of geodesic bigons in the Cayley graph of a finitely presented group, $|\beta|$
denotes the length of the bigon and $\s{area}(\beta)$ is the minimal number of cells in Van Kampen diagrams
for $\beta$ (see Subsection 1.7 below).
\subsection{Geodesic bigons}
In a metric space, a (geodesic) \it bigon \rm is a pair
of geodesic paths with coinciding initial and terminal points.
For example, since any $\mathsf{CAT}(0)$ space $S$ is \it unigeodesic\rm, i.e. a geodesic segment
joining two points is unique,
the geodesic bigons are trivial in $S$.
Of course the absence of nontrivial geodesic bigons does not imply hyperbolicity.

However, a rather unexpected result of \cite{Pap} states that if $S$ is a \textbf{metric graph}
then the hyperbolicity of $S$ follows from the existence of a uniform bound for the ``width'' of geodesic bigons.
It is worth noting that, in \cite{Pap} the word `graph' means a one-dimentional connected $\s{CW}$-complex
whose edges are isometric to the interval $[0,1]{\subset}\Bbb R$.
The initial and terminal points of a geodesic bigon do not necessarily
belong to the set of the vertices of the graph $S$.

In the current paper, speaking on bigons, we omit the adjective `geodesic' since
we do not consider non-geodesic bigons.
\subsection{Discrete version of bigons}
Unlike \cite{Pap} we prefer to work with  ``discrete'' graphs
that is the metric spaces whose distance function takes the values in $\Bbb N$.
Such a space $S$ is interpreted as a graph by declaring that
two \textbf{distinct} points $p,q$ of $S$ are joined by an edge if and only if the distance $|p$-$q|$ is equal
to 1.  This restriction implies the following modification of the notions of path and of bigon.
\vskip3pt
\bf Convention\rm. In this paper the sign `$\leftrightharpoons$' means `is equal by definition'.
By $\Bbb N_{\Cal C}$ we denote the set $\{i{\in}\Bbb N:i$ has property $\Cal C\}$.
For example, $\Bbb N_{<x}{\leftrightharpoons}\{i{\in}\mathbb N:i<x\}$.
\vskip3pt
\bf Definition 1\rm. A \textit{path} in a graph $S$ is a
distance non-increasing map of the form $\Bbb N_{\leqslant l}\overset\alpha\to S$
where the number $l{\in}\Bbb N$ is  the \textit{length} $|\alpha|$ of the path $\alpha$.
A path $\Bbb N_{\leqslant l}\overset\alpha\to S$ is \textit{geodesic} if $l{=}|\alpha(0)$-$\alpha(l)|$.
We postulate that $S$ is connected. It follows that every two points of $S$ can be joined by a geodesic path.

\vskip3pt

Denote $\s{Geo}S\leftrightharpoons\{$the geodesic paths in $S\}$.
For \textbf{distinct} paths $\alpha,\alpha'{\in}\s{Geo}S$,
the \textit{distance} $|\alpha$-$\alpha'|$ is the number
$\s{max}\{1,|\alpha(0)$-$\alpha'(0)|{+}|\alpha(|\alpha|)$-$\alpha'(|\alpha'|)|\}$.
We consider $\s{Geo}S$ as a discrete metric graph in the same way as the graph $S$ itself.

For a pair $\beta{=}\{\alpha,\alpha'\}{\subset}\s{Geo}S$
the \textit{distance function}
is the function\hfil\penalty-10000
 $\Bbb N_{\leqslant l}{\ni}i\mapsto\s w_\beta(i){=}|\alpha(i)$-$\alpha'(i)|$
where $l{=}\s{min}\{|\alpha|,|\alpha'|\}$.
\vskip3pt
\bf Definition 2\rm. A \textit{bigon} in $S$
is a pair $\beta{=}\{\alpha,\alpha'\}$ in $\s{Geo}S$ \textbf{of the same length} $l$ such that $|\alpha$-$\alpha'|{=}1$.
The number $l$ is called \textit{length} $|\beta|$ of the bigon.
We denote $\s{Bg}S\leftrightharpoons\{$the bigons in $S\}$.
\vskip3pt
After this ``discrete'' formalization the Theorem 1.4 of \cite{Pap} can be stated as follows:\vskip3pt
$(*)$\quad\sl If, for a connected graph $S$, the
set $\{\s w_\beta(i):\beta{\in}\s{Bg}S,i{\in}\Bbb N_{\leqslant|\beta|}\}$
 is bounded then $S$ is hyperbolic\rm.
\vskip3pt
In other words the unform thinness of bigons implies the hyperbolicity.
\subsection{Thinness of bigons almost surely}
We would like to soften the hypothesis of the uniform thinness of bigons in the sense of \cite{Pap}.

Let us say that, in a graph $S$, \it the bigons are thin almost surely\rm,
if, for every $\varepsilon{>}0$ there exists $x{\in}\Bbb N$ such that
$\#\{i{\in}\Bbb N:\s{w}_\beta(i){>}x\}\leqslant\varepsilon{\cdot}|\beta|$
(here `$\#$' means the cardinality).

Denote this condition by $\s{TAS}$ (``thin almost surely'').
It can be stated in the following equivalent form:

$\s{TAS}:$\quad\sl The family of functions
$x\mapsto\Frac{\#\s{w}_\beta^{-1}(\Bbb R_{>x})}{|\beta|}$  converges
to zero as $x\to\infty$ \bf uniformly \sl with respect to $\beta{\in}\s{Bg}S$\rm.
\vskip3pt

Here, of course, $\s{w}_\beta^{-1}(\Bbb R_{>x}){=}\{i{\in}\Bbb N:\s{w}_\beta(i){>}x\}$.

One of the main results of the current paper is that $\s{TAS}$ implies the hyperbolicity.
It will be an immediate corollary of Theorem 1 (see Subsection 1.8).
\subsection{Dehn-like function for bigons}
As an application of our main result
we prove the following criterion of hyperbolicity.\par
Consider a finite group presentation $\Cal P{=}(X;\Cal R)$, where $X$ is a finite genera\-ting set
and $\Cal R$ is a finite symmetric set of cyclically reduced defining relations.
Let $S$ denote the Cayley graph corresponding to $\Cal P$ and let $\gamma\mapsto\s a(\gamma)$
 denote the Van Kampen area
of a closed path $\gamma$ in $S$ (see Section 6 for details).
Thus $\s a(\gamma)$ is the minimal number of cells in a Van Kampen
diagrams with boundary path $\gamma$.
For a bigon $\beta{=}\{\alpha,\alpha'\}{\in}\s{Bg}S$ we denote by $\s a(\beta)$
the value $\s a(\gamma)$ where $\gamma$ is a closed path that contains the concatenation $\alpha'{\circ}\alpha^{-1}$
or $\alpha^{-1}{\circ}\alpha'$.
\vskip3pt
\sc Theorem 2\sl.
If a group $G$ possesses a finite presentation
$\Cal P{=}(X;\Cal R)$
 such that
the set
\begin{equation}\label{vk}
\left\{\Frac{\s a(\beta)}{|\beta|}:\beta{\in}\s{Bg}S\right\}\end{equation} is bounded above
then $G$ is hyperbolic\rm.
\subsection{The main result}
Our main technical result states that a condition rather weaker than $\s{TAS}$ implies hyperbolicity.

While $\s{TAS}$ means the uniform convergence of the family $x\mapsto\Frac{\#\s{w}_\beta^{-1}(\Bbb R_{>x})}{|\beta|}$ $(\beta{\in}\s{Bg}S)$
our weakened hypotheses postulates the existence of ``two first steps'' of the convergence.
\vskip5pt
\sc Theorem 1\sl. If, for a connected graph $S$, there exist $Y,Z{\in}\Bbb N$ such that:\hfil\penalty-10000
\vskip3pt
$\s A:$\quad$\s{sup}\left\{\Frac{\#\s{w}_\beta^{-1}(\Bbb N_{>Y})}{|\beta|}:\beta{\in}\s{Bg}S\right\}<1$;\hfil\penalty-10000
\vskip3pt
$\s B:$\quad$\s{sup}\left\{\Frac{\#\s{w}_\beta^{-1}(\Bbb N_{>Z})}{|\beta|}:\beta{\in}\s{Bg}S\right\}<\Frac1{4Y\overset{\vphantom.}{+}2}$,\hfil\penalty-10000
\vskip3pt
then $S$ is hyperbolic\rm.
\vskip7pt
Since $\s{TAS}$ implies $\s A$ and $\s B$ we have the following:
\vskip5pt
\sc Corollary\sl. The condition $\s{TAS}$ imples the hyperbolicity of the graph $S$\rm. \vskip3pt
The proof of Theorem 1 consists in verification that the conditions $\s A$ and $\s B$ imply the uniform
thinness of bigons. The conclusion will then follow from the condition $(*)$.
\subsection{Acknowledgements}
The authors are thankful to the research grant LABEX CEMPI (ANR-11-LABX-0007-01) for
providing a support to Victor Gerasimov to visit the University of
Lille in Summer 2022 when the works on the project has been started.
The authors are also grateful to the Brazilian-French Network in
Mathematics for providing a support to Leonid Potyagailo to visit
Federal University of Belo Horizonte in December 2022 where the paper has been
completed.
 \section{Dense Value Lemma}
\subsection{Triangle inequalities for the width function}
 Let $S$ be a graph (in the sense of 1.5). Keeping the notations of the Introduction, for a path $\alpha{\in}
 \s{Geo}S$, denote by ${\s I}_\alpha$ the segment $[0,|\alpha|]{\subset}\Bbb R$.

\begin{prop}\label{prop1} Let $A,B{\in}\Bbb R$ and  $\alpha,\beta,\gamma{\in}\s{Geo}S$, $a{\in}\s I_\alpha{\cap}\s I_\beta{\cap}\s I_\gamma$. If\hfil\penalty-10000
$\Cal A{\leftrightharpoons}\Bbb N_{\leqslant a}{\cap}\s{w}_{\alpha,\beta}^{-1}(\Bbb R_{>A})$,
$\Cal B{\leftrightharpoons}\Bbb N_{\leqslant a}{\cap}\s{w}_{\beta,\gamma}^{-1}(\Bbb R_{>B})$,
$\Cal C{\leftrightharpoons}\Bbb N_{\leqslant a}{\cap}\s{w}_{\alpha,\gamma}^{-1}(\Bbb R_{>A+B})$
then\hfil\penalty-10000
 $\Cal{C{\subset}A{\cup}B}$ and $\#\Cal{C\leqslant\#A+\#B}$.\end{prop}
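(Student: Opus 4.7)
The plan is short because the statement is essentially a pointwise triangle inequality packaged as a statement about cardinalities. My strategy has two steps: first establish the set-theoretic inclusion $\Cal C \subset \Cal A \cup \Cal B$; then deduce $\#\Cal C \leqslant \#\Cal A + \#\Cal B$ by the obvious bound $\#(\Cal A \cup \Cal B) \leqslant \#\Cal A + \#\Cal B$.

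For the inclusion, I would fix an arbitrary $i \in \Cal C$ and argue by the triangle inequality in $S$. Since $a \in \s I_\alpha \cap \s I_\beta \cap \s I_\gamma$, the three values $\alpha(i),\beta(i),\gamma(i)$ are all defined for every $i \in \Bbb N_{\leqslant a}$, and the metric inequality $|\alpha(i)\text{-}\gamma(i)| \leqslant |\alpha(i)\text{-}\beta(i)| + |\beta(i)\text{-}\gamma(i)|$ reads $\s w_{\alpha,\gamma}(i) \leqslant \s w_{\alpha,\beta}(i) + \s w_{\beta,\gamma}(i)$. Because $i\in\Cal C$ the left-hand side strictly exceeds $A+B$, so at least one summand on the right must strictly exceed its own threshold (otherwise both would be $\leqslant$ their bound and the sum would be $\leqslant A+B$, a contradiction). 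This places $i$ in $\Cal A$ or in $\Cal B$, proving the inclusion.

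The cardinality bound is then immediate from $\Cal C \subset \Cal A \cup \Cal B$. There is no real obstacle here; the only item worth verifying carefully is the common domain condition $i \leqslant a \leqslant \min\{|\alpha|,|\beta|,|\gamma|\}$, which is precisely the hypothesis $a \in \s I_\alpha \cap \s I_\beta \cap \s I_\gamma$. Note that the geodesic assumption on $\alpha,\beta,\gamma$ is not used in the argument: the proposition is really a metric-space statement about three paths sharing a common initial segment of the index set, and the content is entirely in the pointwise triangle inequality.
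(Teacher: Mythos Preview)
Your proof is correct and is precisely the argument the paper has in mind: the paper's own proof consists of the single sentence ``Follows from definitions and the $\triangle$-inequality,'' and you have simply written out those details. Your remark that the geodesic hypothesis is unused is also accurate.
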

\begin{proof} Follows from definitions and the $\triangle$-inequality.\end{proof}

Using the induction on $n$ and Proposition \ref{prop1} we have:

\vskip5pt

{\bf Corollary.} \textsl{Let $\alpha_0,\dots,\alpha_n{\in}\s{Geo}S$ be paths whose lengths belong to the segment $[a,b]$ ($a{<}b$) and
$\#(\Bbb N_{\leqslant a}{\cap}\s{w}_{\alpha_{i-1},\alpha_i}^{-1}(\Bbb R_{>A_i}))\leqslant\varepsilon b$ for all $i{\in}\overline{1,n}$.
Then}
 $\#(\Bbb N_{\leqslant a}{\cap}\s{w}_{\alpha_0,\alpha_n}^{-1}(\Bbb R_{>A}))\leqslant n\varepsilon b$,
where $A{\leftrightharpoons}\sum_{i=1}^nA_i$.

 \subsection{Bands}
 A \textit{band} is an \textbf{ordered} pair $\beta=(\beta_0,\beta_1)$ of geodesic
paths in $S$ of the same length.
 The paths $\beta_i$ are called the \textit{sides} of $\beta$.
 The length $\vert\beta\vert$ of $\beta$ is the length of its sides.
 Denote $\s I_\beta{\leftrightharpoons}\s I_{\beta_0}{=}\s I_{\beta_1}$.
We consider the width function $\s w_\beta$ of a band $\beta$ defined
on the set $\Bbb N_{\leqslant|\beta|}$.

\begin{prop}\label{prop2}
Let $a_-,a_+,Z\in\Bbb N$ and let
 $\varepsilon{\in}\left(0,{1\over a}\right)$ where $a{=}a_-{+}a_+$. Assume that
every bigon $\{\alpha,\alpha'\}{\in}\s{Bg}S$ satisfies the condition:
\begin{equation}\label{EZ}
\tag*{$\s E(\alpha,\alpha')$}\#\s w_{\alpha,\alpha'}^{-1}(\Bbb N_{>Z})<\varepsilon{\cdot}|\alpha|.
\end{equation}
Then, for every $\nu{\in}(\varepsilon a,1)$, there exists $R{\in}\Bbb N$
such that if the length $l$ of a band $\beta$ is ${\geqslant}R$
and  $\s{w}_\beta(0){\leqslant}a_-,\s{w}_\beta(l){\leqslant}a_+$,
then
$$\#\s w_\beta^{-1}(\Bbb R_{>D})<\nu{\cdot}l$$
where $D{\leftrightharpoons}aZ{+}a_-$.\end{prop}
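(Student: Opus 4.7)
My plan is to bound the band width by interpolating between $\beta_0$ and $\beta_1$ with a chain of at most $a=a_-+a_+$ intermediate geodesics whose consecutive pairs are bigons, and then invoking the corollary of Proposition~\ref{prop1}. Heuristically, $a$ bigons in a row accumulate a width bound of $aZ$ on all but $\sim a\varepsilon l$ indices, matching the announced thresholds $D=aZ+a_-$ and $\nu>\varepsilon a$.

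Concretely, I fix geodesic paths $u_0=\beta_0(0),u_1,\dots,u_{d_-}=\beta_1(0)$ in $S$ (with $d_-=|\beta_0(0)-\beta_1(0)|\leqslant a_-$) and $v_0=\beta_0(l),\dots,v_{d_+}=\beta_1(l)$ (with $d_+\leqslant a_+$), and set $\alpha_i$ to be a geodesic from $u_{\min(i,d_-)}$ to $v_{\max(0,i-d_-)}$ for $0\leqslant i\leqslant n:=d_-+d_+$. Then $\alpha_0=\beta_0$, $\alpha_n=\beta_1$, each consecutive pair shifts a single endpoint by one step, and the triangle inequality yields $|\alpha_i|\in[l-a,l+a]$. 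When consecutive $\alpha_i,\alpha_{i+1}$ happen to have equal length they form a bigon, and hypothesis $\s E$ supplies $\#\s w_{\alpha_i,\alpha_{i+1}}^{-1}(\Bbb N_{>Z})<\varepsilon|\alpha_i|$. Feeding these bounds into the corollary of Proposition~\ref{prop1} with all $A_i=Z$ controls $\#(\Bbb N_{\leqslant l-a}\cap\s w_\beta^{-1}(\Bbb R_{>aZ}))$ by $a\varepsilon(l+a)$; the at most $a$ remaining indices in $(l-a,l]$ contribute a bounded additive term, and the slack $a_-$ in $D$ absorbs the triangle-inequality boundary estimate $\s w_\beta(i)\leqslant 2i+a_-$ valid near $i=0$. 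For $l\geqslant R$ depending on $\nu-\varepsilon a$, the resulting total $a\varepsilon l+O(1)$ is strictly less than $\nu l$, as required.

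The main technical obstacle is the equal-length condition in the previous step: shifting a single endpoint of a geodesic by $1$ may change its length by $\pm 1$ and thereby destroy the bigon condition on a consecutive pair. I expect to handle this either by inserting short auxiliary geodesics that split a length-mismatching step into two genuine bigon steps (at most doubling $n$, still comparable to $a$), or by choosing the connecting paths $(u_i),(v_j)$ so that $|\alpha_i|$ stays constant throughout. In rigid cases such as trees no bigons exist at all, so $\s E$ is vacuous and a direct triangle-inequality bound takes over. Extracting the precise asymmetric $a_-$ (rather than $a_+$ or $a$) in $D$ from this bookkeeping will be the most delicate point of the verification.
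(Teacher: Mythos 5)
Your chain construction and the appeal to the corollary of Proposition \ref{prop1} coincide with the paper's strategy, but the point you defer --- consecutive geodesics of unequal length --- is precisely the crux, and neither of the two fixes you float can work. A bigon consists by definition of two geodesics of the \emph{same} length, so a chain of genuine bigon steps can never change the length; a step across which $|\alpha_i|$ jumps by $1$ therefore cannot be ``split into two genuine bigon steps'', no matter how many auxiliary geodesics you insert. Nor can you arrange $|\alpha_i|$ to be constant: $\alpha_i$ is a geodesic from $u_{\min(i,d_-)}$ to $v_{\max(0,i-d_-)}$, so its length equals the distance between these two points, which is forced by the endpoints and in general differs from $l$. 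As written, the hypothesis $\s E$ simply does not apply to the length-mismatching steps, so $\#\s w_{\alpha_{i-1},\alpha_i}^{-1}(\Bbb N_{>Z})$ is not controlled and the corollary of Proposition \ref{prop1} cannot be fed.

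The paper's resolution is a comparison trick, not a refinement of the chain: when $\bigl||\alpha_i|-|\alpha_{i-1}|\bigr|=1$, extend the shorter geodesic by the single edge joining its displaced endpoint to the corresponding endpoint of the longer one; the extended path is still geodesic, has the same length as the longer path, and forms with it a genuine bigon $\delta$, with $\s w_{\alpha_{i-1},\alpha_i}\leqslant\s w_\delta+1$ (the $+1$ accounts for the unit shift of parametrization when the edge is appended at the initial end). Thus $\s E(\delta)$ gives the count with threshold $Z{+}1$ instead of $Z$ for such steps, and the corollary of Proposition \ref{prop1} is applied with a mixture of $A_i\in\{Z,Z{+}1\}$. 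This is also the true source of the asymmetry in $D$: at most $a_-$ steps move the initial endpoint and can force a $+1$, whence $D=aZ+a_-$; it does not come from the boundary estimate $\s w_\beta(i)\leqslant 2i+a_-$ near $i=0$ that you invoke. Once this comparison is supplied, your remaining bookkeeping (a bound of the form $a\varepsilon(l+a)$ plus an additive term of order $a$, which is ${<}\nu l$ for $l\geqslant R$) goes through essentially as in the paper.
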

\begin{proof}
Let $\alpha{=}\alpha_0,\alpha_1,\dots,\alpha_a{=}\alpha'$ be a path in the graph $\s{Geo}S$,
 connecting the sides of the band $\beta{=}(\alpha,\alpha')$.
 \noindent Since the lengths of the neighboring paths  $\alpha_i$ differ by at most $1$, then
$|\alpha_i|{\in}\left[l{-}{a\over2},l{+}{a\over2}\right]$ for all $i{\in}\Bbb N_{\leqslant n}$.\par
\begin{picture}(0,0)(0,-10)
\put(300,-10){\makebox(0,0)[cb]{$\alpha{=}\alpha_0$}}
\put(275,-76){\makebox(0,0)[cc]{$\alpha'{=}\alpha_a$}}
\put(-14,-70){
\pdfximage{\mpPath 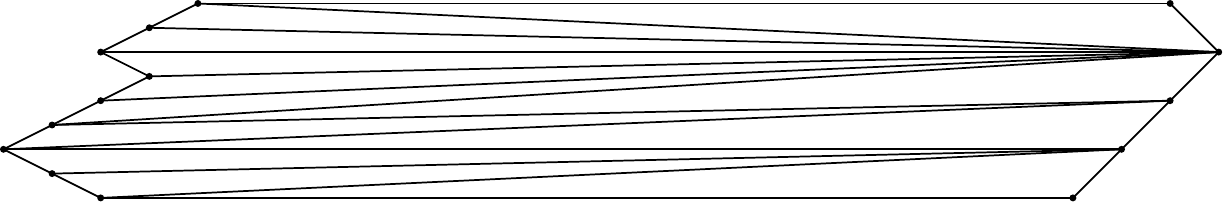}\pdfrefximage\pdflastximage}
\end{picture}\vskip72pt
So for every $i{\in}\Bbb N, 1{\leqslant}i{\leqslant}n$,
either $|\alpha_{i-1}|{=}|\alpha_i|$, and, by $\s E(\alpha_{i-1},\alpha_i)$
\begin{equation}\#(\Bbb N_{\leqslant l-(a/2)}{\cap}\s{w}_{\alpha_{i-1},\alpha_i}^{-1}(\Bbb R_{>Z}))\leqslant
\varepsilon{\cdot}|\alpha_i|\leqslant\varepsilon\left(l{+}{a\over2}\right),\end{equation}
or $||\alpha_{i}|{-}|\alpha_{i-1}||{=}1$ and there exists a bigon with one side equal to
$\alpha_{i}$ or $\alpha_{i-1}$ and the other side equal to a subpath of $\alpha_{i-1}$ or $\alpha_i$ respectively.
In the latter case we have
 \begin{equation}\label{c2}\#(\Bbb N_{\leqslant l-(a/2)}{\cap}\s{w}_{\alpha_{i-1},\alpha_i}^{-1}(\Bbb R_{>Z+1}))\leqslant
\varepsilon{\cdot}\s{max}\{|\alpha_{i-1}|,|\alpha_i|\}\leqslant\varepsilon\left(l{+}{a\over2}\right).\end{equation}
Indeed, $\s w_\delta\geqslant\s w_{\alpha_{i-1,\alpha_i}}{-}1$ for the bigon $\delta$ obtained by adding an edge
to $\alpha_{i-1}$ or $\alpha_i$. Then (\ref{c2}) follows from $\s E(\delta)$.

Applying the Corollary of Proposition \ref{prop1} and  using $(1)$ and $(2)$, we obtain \hfil\penalty-10000
$\#\left(\Bbb N_{\leqslant l-(a/2)}{\cap}\s{w}_\beta^{-1}(\Bbb R_{>aZ+a_-})\right)\leqslant
\varepsilon{\cdot}a\left(l{+}{a\over2}\right)$. It implies that\hfil\penalty-10000
$\#\left(\s{w}_\beta^{-1}(\Bbb R_{>aZ+a_-})\right)\leqslant
\varepsilon{\cdot}a\left(l{+}{a\over 2}\right){+}{a\over2}<(\varepsilon a)l{+} a$ as
 $\varepsilon a<1$ and\hfil\penalty-10000
 $\#\Bbb N_{<a/2}\leqslant a/2$.

Since $\displaystyle\underset{l\rightarrow\infty}{\s{lim}}\frac{(\varepsilon a)l{+}a}l=\varepsilon a<1$,
there exists $R{\in}\Bbb N$
such that
$\displaystyle\frac{(\varepsilon a)l{+}a}l<\nu$ for all $l{\geqslant}R$.

Thus for $l{\geqslant}R$, we have $\#\left(\s{w}_\beta^{-1}(\Bbb R_{>aZ+a_-})\right)\leqslant\nu l$.\par
The Proposition is proved.\end{proof}

{\bf Lemma} ({\sf Dense Value}).
{\sl Let $a_-,a_+,a,\varepsilon,\nu,Z,R$ be as in Proposition \ref{prop2}.\hfil\penalty-10000
 Assume that $\s{E}(\alpha,\alpha')$ holds for every bigon $(\alpha,\alpha')$ in $S$. Set $D{=}aZ{+}a_-$.
Then, for every band $\beta$ of length $l\geqslant R$ with $\s w_\beta(0){\leqslant}a_-$, $\s w_\beta(l){\leqslant}a_+$,
there exists $p{\in}\Bbb N_{\leqslant D}$ such that}
$\displaystyle\frac{\#\s{w}_\beta^{-1}(p)}l\geqslant\varrho\leftrightharpoons\frac{1{-}\nu}{D{+}1}$.
\begin{proof} Consider a band $\beta$ satisfying the hypothesis.
 By Proposition \ref{prop2},\hfil\penalty-10000
 $\#\s w_\beta^{-1}(\Bbb N_{\leqslant D})\geqslant(1{-}\nu)l$.
Since $\#\Bbb N_{\leqslant D}=D{+}1$, at least one of the sets $\s w_\beta^{-1}p$ ($p{\in}\Bbb N_{\leqslant D}$)
has cardinality ${\geqslant}\Frac{(1{-}\nu)l}{D{+}1}=\varrho{\cdot}l$.
\end{proof}
\vskip3pt
Each such a value $p$ we call \it dense value \rm for the band $\beta$.

\section{\sc Rank and jumpers}
\vskip3pt
\subsection{Equilateral segments and their rank}
We continue considering a graph $S$ keeping the previous notations.

Let us call by \it fork \rm any band $\beta{=}(\gamma,\gamma')$ such that
$|\gamma(0)$-$\gamma'(0)|\leqslant1$.

In this section we fix a fork $\beta{=}\{\gamma,\gamma'\}$ and a value $p{\in}\Bbb N$.
(Further, $p$ will be a dense value for some band, but it is not essential in this section.)

The numbers from the set $\Cal P{\leftrightharpoons}\s{w}_\beta^{-1}p$ are called $p$\it-jumpers\rm.
A segment $I{\subset}\s I_\beta$ is said to be $(p,\beta)$\it-equilateral\rm, if $\partial I{\subset}\Cal P$.
The $(p,\beta)$\it-rank \rm of a $(p,\beta)$-equilateral segment $I{=}[t,t']$
is defined to be the integer
$\s{rk}_{p,\beta}I{=}\s{rk}_{p,\beta}(t,t'){\leftrightharpoons}t'{-}t{+}p{-}|\gamma(t)$-$\gamma'(t')|$.
Sometimes we will omit the indices `$p$' and/or `$\beta$'.

\begin{prop}\label{prop3} If $I,J$ are $(p,\beta)$-equilateral segments and $I{\supset}J$, then
$\s{rk}I{\geqslant}\s{rk}J$.\end{prop}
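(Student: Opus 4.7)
The plan is to reduce the monotonicity of the rank to a direct application of the triangle inequality, using the fact that $\gamma$ and $\gamma'$ are geodesic paths.

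First I would set up notation. Write $I=[s,s']$ and $J=[t,t']$ with $s\leqslant t\leqslant t'\leqslant s'$ and $\{s,s',t,t'\}\subset\Cal P$ (so that both segments are $(p,\beta)$-equilateral). Unfolding the definition of rank, the desired inequality $\s{rk}_{p,\beta}I\geqslant\s{rk}_{p,\beta}J$ is equivalent to
\[
(s'-s)-(t'-t)\;\geqslant\;|\gamma(s)\text{-}\gamma'(s')|-|\gamma(t)\text{-}\gamma'(t')|,
\]
i.e., writing the left-hand side as $(t-s)+(s'-t')$, it suffices to show
\[
|\gamma(s)\text{-}\gamma'(s')|\;\leqslant\;(t-s)+|\gamma(t)\text{-}\gamma'(t')|+(s'-t').
\]

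Next I would establish the last inequality via the triangle inequality, passing from $\gamma(s)$ to $\gamma'(s')$ through the intermediate points $\gamma(t)$ and $\gamma'(t')$:
\[
|\gamma(s)\text{-}\gamma'(s')|\;\leqslant\;|\gamma(s)\text{-}\gamma(t)|+|\gamma(t)\text{-}\gamma'(t')|+|\gamma'(t')\text{-}\gamma'(s')|.
\]
Since $\gamma,\gamma'\in\s{Geo}S$, their restrictions to $\Bbb N_{\leqslant|\gamma|}$ and $\Bbb N_{\leqslant|\gamma'|}$ are isometries onto their images; in particular $|\gamma(s)\text{-}\gamma(t)|=t-s$ and $|\gamma'(t')\text{-}\gamma'(s')|=s'-t'$. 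Substituting gives precisely the inequality displayed above, which is the rearranged form of $\s{rk}_{p,\beta}I\geqslant\s{rk}_{p,\beta}J$.

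There is no real obstacle here: the statement is essentially a reformulation of the triangle inequality, once one notices that the ``$p-|\gamma(\cdot)\text{-}\gamma'(\cdot)|$'' part of the rank formula is monotone against the ``$t'-t$'' part precisely because the detours $[s,t]$ along $\gamma$ and $[t',s']$ along $\gamma'$ are traversed at unit speed. The only thing to be mindful of is to pick the correct pair of intermediate points (namely $\gamma(t)$ and $\gamma'(t')$, not $\gamma(s')$ or $\gamma'(s)$) so that the two ``detour'' terms match the length increments $(t-s)$ and $(s'-t')$; any other choice would not yield a matching telescoping.
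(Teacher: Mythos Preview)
Your proof is correct and follows essentially the same approach as the paper: both arguments apply the triangle inequality to bound $|\gamma(s)\text{-}\gamma'(s')|$ via the intermediate points $\gamma(t)$ and $\gamma'(t')$, then evaluate the two ``detour'' terms using that $\gamma,\gamma'$ are geodesic. The only cosmetic difference is that the paper writes the computation as $|\gamma'(b)\text{-}\gamma(a)|\leqslant(b{-}d)+(d{-}c{+}p{-}m)+(c{-}a)=b{-}a{+}p{-}m$ directly, rather than first rearranging the rank inequality as you do.
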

\begin{proof} Let $I{=}[a,b]$, $J{=}[c,d]$ and thus $a{\leqslant}c{<}d{\leqslant}b$.
Let $m{\leftrightharpoons}\s{rk}J$, that is\hfil\penalty-10000
 $|\gamma'(d)$-$\gamma(c)|=d{-}c{+}p{-}m$.
By the $\triangle$-inequality,\hfil\penalty-10000
$|\gamma'(b)$-$\gamma(a)|\leqslant|\gamma'(b)$-$\gamma'(d)|{+}|\gamma'(d)$-$\gamma(c)|{+}|\gamma(c)$-$\gamma(a)|=$\hfil\penalty-10000
$\leqslant
(b{-}d){+}(d{-}c{+}p{-}m){+}(c{-}a)=b{-}a{+}p{-}m$.
It implies $\s{rk}I{\geqslant}m$.\end{proof}

\begin{prop}\label{prop4} $0\leqslant\s{rk}I\leqslant1{+}p$ for every $p$-equilateral segment $I$.\end{prop}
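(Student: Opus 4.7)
The plan is to derive both inequalities from the triangle inequality applied between the ``diagonal'' endpoints $\gamma(t)$ and $\gamma'(t')$ of $I{=}[t,t']$, interpolating through two different intermediate points. The lower bound $\s{rk} I\geqslant 0$ will use only the equilaterality of $I$ and the geodesicity of $\gamma'$; the upper bound $\s{rk} I\leqslant 1{+}p$ is where the fork hypothesis $|\gamma(0)$-$\gamma'(0)|{\leqslant}1$ must be used.

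For the lower bound, I would route from $\gamma(t)$ to $\gamma'(t')$ through $\gamma'(t)$. The triangle inequality together with $t{\in}\Cal P$ (giving $|\gamma(t)$-$\gamma'(t)|{=}p$) and the geodesicity of $\gamma'$ (giving $|\gamma'(t)$-$\gamma'(t')|{=}t'{-}t$) yields $|\gamma(t)$-$\gamma'(t')|\leqslant p{+}(t'{-}t)$. Substituting into the definition gives $\s{rk}(t,t'){=}\,t'{-}t{+}p{-}|\gamma(t)$-$\gamma'(t')|\geqslant 0$.

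For the upper bound, I would instead route from $\gamma'(t')$ back along $\gamma'$ to its origin, cross to the origin of $\gamma$, and run out along $\gamma$ to $\gamma(t)$. The triangle inequality for this three-segment path reads $|\gamma'(0)$-$\gamma'(t')|\leqslant|\gamma'(0)$-$\gamma(0)|{+}|\gamma(0)$-$\gamma(t)|{+}|\gamma(t)$-$\gamma'(t')|$. By geodesicity, $|\gamma'(0)$-$\gamma'(t')|{=}t'$ and $|\gamma(0)$-$\gamma(t)|{=}t$; by the fork hypothesis, $|\gamma'(0)$-$\gamma(0)|{\leqslant}1$. Hence $|\gamma(t)$-$\gamma'(t')|\geqslant t'{-}1{-}t$, and substituting gives $\s{rk}(t,t')\leqslant(t'{-}t){+}p{-}(t'{-}t{-}1)=p{+}1$.

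No step presents a genuine obstacle --- the proposition reduces to two applications of the triangle inequality. The one conceptual observation is that the fork hypothesis is precisely what produces the additive constant $1$ in the upper bound: without the assumption $|\gamma(0)$-$\gamma'(0)|{\leqslant}1$, the only available lower bound for $|\gamma(t)$-$\gamma'(t')|$ via $\gamma(t')$ or $\gamma'(t)$ would be $|(t'{-}t){-}p|$, giving the weaker estimate $\s{rk} I\leqslant 2p$.
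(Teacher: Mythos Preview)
Your proof is correct and essentially identical to the paper's: both bounds come from the same two triangle-inequality routes (through an endpoint jumper for the lower bound, through the common origin of the fork for the upper bound). The only cosmetic difference is that for the lower bound you route through $\gamma'(t)$ while the paper routes through $\gamma(t')$; these are symmetric and yield the same estimate.
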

\begin{proof}
Let $I{=}[t,t']$. By the $\triangle$-inequality,\hfil\penalty-10000
$t'{-}t{+}p{-}\s{rk}(t,t')=|\gamma'(t')$-$\gamma(t)|\leqslant|\gamma'(t')$-$\gamma(t')|{+}|\gamma'(t')$-$\gamma(t)|=p{+}t'{-}t$,
and thus $\s{rk}(t,t'){\geqslant}0$.

On the other hand, since $|\gamma(0)$-$\gamma'(0)|\leqslant 1$, we have:\hfil\penalty-10000
 $t'{=}|\gamma'(0)$-$\gamma'(t')|{\leqslant}|\gamma'(0)$-$\gamma(0)|{+}|\gamma(0)$-$\gamma(t)|{+}|\gamma(t)$-$\gamma'(t')|{\leqslant}
1{+}t{+}t'{-}t{+}p{-}\s{rk}(t,t')$,
and thus
$\s{rk}(t,t'){\leqslant}1{+}p$.\end{proof}
\subsection{Rank decay lemma}
From now on we assume that, for the graph $S$, there exist numbers $Y{>}0$ and $\theta{\in}(0,1){\subset}\Bbb R$
such that, for every bigon $\alpha{\in}\s{Bg}S$ the following inequality holds (compare with the statement $\s A$ of Theorem 1 in Subsection 1.8):\hfil\penalty-10000
\vskip2pt
$\s{A}_{Y,\theta}(\alpha):$\quad$\#\s{w}_\alpha^{-1}(\Bbb R_{>Y}))<\theta{\cdot}|\alpha|$.\hfil\penalty-10000
\vskip2pt
Until the end of the proof of Theorem 1, such numbers $Y$ and $\theta$ will be fixed.\vskip3pt
\bf Definition 3\rm. For a fork $\beta$ in $S$, we call the elements of the set $\s w_\beta^{-1}(\Bbb R_{\leqslant 2Y+1})$
 \it small jumpers \rm of $\beta$.
We denote $\s{sJ}(\beta){\leftrightharpoons}\{$the small jumpers of $\beta\}$.

The following Proposition is a crucial point in the proof of Theorem 1.
The proof uses an idea of \cite[Lemma 1.5]{Pap} where the situation is rather
simpler.

\begin{prop} {\sc(Rank Decay Lemma)} Let $\beta{=}(\gamma,\gamma')$ be a fork
and $I{\subset}\s I_\beta$ a concatenation $J_-{\circ}J{\circ}J_+$ of
$(p,\beta)$-equilateral segments
$J_-,J,J_+$ for some $p{\in}\Bbb N$. If
$\s{rk}_{p,\beta}J_-{=}\s{rk}_{p,\beta}J_+{=}m$ and
 \begin{equation}\label{c3}
|J|\geqslant\theta{\cdot}|I|{+}(1{+}\theta)p,\end{equation}
then either $J{\cap}\s{sJ}(\beta)\ne\varnothing$ or $\s{rk}_{p,\beta}I\geqslant m{+}1$.\hfil\end{prop}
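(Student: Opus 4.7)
We argue by contradiction: assume $J\cap\s{sJ}(\beta)=\varnothing$ and $\s{rk}_{p,\beta}(I)\le m$. Proposition \ref{prop3} applied to $I\supset J_-$ forces $\s{rk}_{p,\beta}(I)=m$, and since the endpoints $c,d$ of $J$ are $p$-jumpers (so $\s{w}_\beta(c)=\s{w}_\beta(d)=p$) yet avoid $\s{sJ}(\beta)$, we obtain $p>2Y+1$. Writing $J_-=[a,c]$, $J=[c,d]$, $J_+=[d,b]$ and $u:=p-m\ge 0$, the three rank identities read
\[
|\gamma(a)-\gamma'(c)|=|J_-|+u,\quad |\gamma(d)-\gamma'(b)|=|J_+|+u,\quad |\gamma(a)-\gamma'(b)|=|I|+u.
\]
Choose geodesic shortcuts $\sigma_-$ from $\gamma(a)$ to $\gamma'(c)$ and $\sigma_+$ from $\gamma(d)$ to $\gamma'(b)$ of the forced lengths and assemble
\[
\pi_1:=\sigma_-\circ\gamma'|_{[c,b]},\qquad \pi_2:=\gamma|_{[a,d]}\circ\sigma_+,
\]
two paths from $\gamma(a)$ to $\gamma'(b)$ of common length $L=|I|+u$. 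The third rank identity ensures that both $\pi_1,\pi_2$ are geodesics; assuming they are distinct, $(\pi_1,\pi_2)$ is a bigon in $S$ (the degenerate case is treated separately).

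The key estimate is the width of this bigon on the middle range $s\in[|J_-|+u,\,|J_-|+|J|]$, where $\pi_1(s)=\gamma'(s+a-u)$ and $\pi_2(s)=\gamma(s+a)$. Setting $t=s+a\in[c+u,d]\subset J$, the triangle inequality together with the no-small-jumper assumption yield
\[
|\pi_1(s)-\pi_2(s)|=|\gamma(t)-\gamma'(t-u)|\ge \s{w}_\beta(t)-u>2Y+1-u.
\]
When $u\le Y$ the right-hand side exceeds $Y$, so each of the $|J|-u+1$ integer parameters in the middle range contributes to $\s{w}_{(\pi_1,\pi_2)}^{-1}(\mathbb{N}_{>Y})$. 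Applying condition $\s A_{Y,\theta}$ to the bigon $(\pi_1,\pi_2)$ bounds this count by $\theta L=\theta(|I|+u)$, so $|J|-u+1<\theta(|I|+u)$, which rearranges to $|J|<\theta|I|+(1+\theta)u-1\le\theta|I|+(1+\theta)p-1$, contradicting (\ref{c3}).

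The main obstacle, and the place where the proof genuinely departs from the simpler setting of Papasoglu \cite{Pap}, is the complementary regime $u>Y$: there the pointwise width bound above no longer forces width $>Y$. I would handle it by combining the previous inequality with its dual $|\gamma(t)-\gamma'(t-u)|\ge u-\s{w}_\beta(t-u)$ (valid because $t-u\in J$ and $\s{w}_\beta$ is controlled along each of $\gamma,\gamma'$), obtaining the sharper lower bound $\max\bigl(|\s{w}_\beta(t)-u|,\,|\s{w}_\beta(t-u)-u|\bigr)$. The factor $(1+\theta)p$ rather than $(1+\theta)u$ in the hypothesis is calibrated precisely to absorb the loss incurred when the values of $\s{w}_\beta$ on $J$ lie close to $u$; quantifying the portion of $J$ on which $\s{w}_\beta$ falls outside the interval $[u-Y,u+Y]$ then supplies enough wide parameters to contradict $\s A_{Y,\theta}$ in this regime as well, completing the proof.
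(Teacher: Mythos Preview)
Your construction of the bigon $(\pi_1,\pi_2)$ from the two ``diagonal'' shortcuts is exactly the bigon $(\sigma,\sigma')$ the paper builds, and your treatment of the regime $u\le Y$ is correct (indeed it works through $u\le Y{+}1$, since $\s w_\beta(t){>}2Y{+}1$ forces $\s w_\beta(t)\ge 2Y{+}2$). The genuine gap is your handling of $u>Y$. The proposed fix --- bounding the width from below by $\max(|\s w_\beta(t){-}u|,|\s w_\beta(t{-}u){-}u|)$ and then ``quantifying the portion of $J$ on which $\s w_\beta$ falls outside $[u{-}Y,u{+}Y]$'' --- is circular: you have no a~priori information about $\s w_\beta$ on the interior of $J$ beyond $\s w_\beta>2Y{+}1$, so nothing prevents $\s w_\beta$ from sitting near $u$ throughout $J$, in which case your lower bound collapses and $\s A_{Y,\theta}$ yields no contradiction. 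The ``degenerate'' case $\pi_1=\pi_2$ you set aside is in fact an extreme instance of this: then $\s w_\beta(t)\le u$ on the whole middle range.

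What you are missing is the one place where the \emph{fork} hypothesis $|\gamma(0){-}\gamma'(0)|\le 1$ enters. From it one gets, for every $t$ in the middle range,
\[
|\gamma(t){-}\gamma'(t{-}u)|\ \ge\ |\gamma(t){-}\gamma(0)|-|\gamma(0){-}\gamma'(0)|-|\gamma'(0){-}\gamma'(t{-}u)|\ \ge\ t-1-(t{-}u)=u-1,
\]
so for $u\ge Y{+}2$ the bigon width on the middle range already exceeds $Y$ and the same counting as in your first case applies. This closes your gap with no further case analysis. The paper organises the same ingredients in the contrapositive direction: rather than showing the bigon is wide everywhere on $L$, it invokes $\s A_{Y,\theta}$ to produce a single point $y\in L$ with $|\gamma(b){-}\gamma'(a)|\le Y$ (where $b{-}a=u$), and then the fork inequality above, read as $b{-}a\le Y{+}1$, gives $\s w_\beta(a)\le(b{-}a){+}Y\le 2Y{+}1$, i.e.\ $a\in J\cap\s{sJ}(\beta)$. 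That route avoids splitting on the size of $u$ altogether. (A minor slip: you assert $u=p{-}m\ge0$, but Proposition~\ref{prop4} only gives $u\ge -1$; the paper handles $u=-1$ with a one-line observation.)
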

\begin{proof}
Let $t_0{<}t_1{<}t_2{<}t_3$ be $p$-jumpers of $\beta$,\begin{picture}(0,0)(-65,-15)
\put(-6,10){\makebox(0,0)[cr]{$\gamma$}}
\put(-5,10){\line(1,0){90}}
\put(-6,-10){\makebox(0,0)[cr]{$\gamma'$}}
\put(-5,-10){\line(1,0){90}}
\put(0,-12){\makebox(0,0)[tc]{$t_0$}}
\put(0,-10){\line(0,1){20}}
\put(10,1){\makebox(0,0)[bl]{$\sigma'$}}
\put(20,-10){\line(-1,1){20}}
\put(20,-12){\makebox(0,0)[tc]{$t_1$}}
\put(20,-10){\line(0,1){20}}
\put(35,-12){\makebox(0,0)[tc]{$a$}}
\put(45,12){\makebox(0,0)[bc]{$b$}}
\put(35,-10){\line(1,2){10}}
\put(35,-10){\line(0,1){20}}
\put(60,-12){\makebox(0,0)[tc]{$t_2$}}
\put(60,-10){\line(0,1){20}}
\put(80,-12){\makebox(0,0)[tc]{$t_3$}}
\put(80,-10){\line(0,1){20}}
\put(70,-2){\makebox(0,0)[tr]{$\sigma$}}
\put(80,-10){\line(-1,1){20}}
\end{picture}\hfil\penalty-10000
$J_-{=}[t_0,t_1]$, $J{=}[t_1,t_2]$,
 $J_+{=}[t_2,t_3]$.

Since $\s{rk}J_-=m$ one has:\hfil\penalty-10000
$|\gamma'(t_3)$-$\gamma(t_0)|\leqslant|\gamma'(t_1)$-$\gamma(t_0)|{+}|\gamma'(t_3)$-$\gamma'(t_1)|=(t_1{-}t_0{+}p
{-}m){+}(t_3{-}t_1)=$\hfil\penalty-10000
$=
t_3{-}t_0{+}p{-}m$.\hfil\penalty-10000
Similarly, since $\s{rk}J_+{=}m$ one has:\hfil\penalty-10000
$|\gamma'(t_3)$-$\gamma(t_0)|\leqslant|\gamma'(t_3)$-$\gamma(t_2)|{+}|\gamma(t_2)$-$\gamma(t_0)|=
(t_3{-}t_2{+}p{-}m){+}(t_2{-}t_0)=$\hfil\penalty-10000$
=t_3{-}t_0{+}p{-}m$.\hfil\penalty-10000
If one of these two inequalities is strict (then the other is also strict), then $\s{rk}I{=}\s{rk}[t_0,t_3]{>}m$.
Since the value of rank are integer, $\s{rk}I\geqslant m{+}1$ as claimed.

Suppose that both inequalities are equalities.
By joining each pair of $\{\gamma(t_0),\gamma'(t_1)\}$ and $\{\gamma(t_2),\gamma'(t_3)\}$
by a geodesic path in $S$ and adding to them the corresponding pieces of geodesics
$\gamma$, $\gamma'$, we obtain a (geodesic) bigon $(\sigma,\sigma')$ of length
$t_3{-}t_0{+}p{-}m$. See the picture where $\sigma$ is the side of the bigon
containing the point $\gamma(t_2)$, and $\sigma'$ is the side containing $\gamma'(t_1)$.

The length of the segment\hfil\penalty-10000
$L{\leftrightharpoons}\{y{\in}[0,|\sigma|]:\sigma(y){\in}\s{Im}\gamma,\sigma'(y){\in}\s{Im}\gamma'\}{=}[t_1{-}t_0{+}p{-}m,t_2{-}t_0]$,
is equal to\hfil\penalty-10000
 $|\sigma|{-}(t_1{-}t_0{+}p{-}m){-}(t_3{-}t_2{+}p{-}m){=}t_2{-}t_1{-}p{+}m$.\hfil\penalty-10000
It contains $|	L|{+}1=t_2{-}t_1{-}p{+}m{+}1$ integers.\hfil\penalty-10000
By (\ref{c3}) and Proposition \ref{prop4} one has:\hfil\penalty-10000
$\theta|\sigma|=\theta(t_3{-}t_0{+}p{-}m)\leqslant t_2{-}t_1{-}p{-}\theta m\leqslant|L|$.\hfil\penalty-10000
By $\s{A}_{Y,\theta}$, there exists $y{\in}L{\cap}\Bbb N$ such that\hfil\penalty-10000
 $Y\geqslant\s w_{\sigma,\sigma'}(y)=|\sigma(y)$-$\sigma'(y)|=|\gamma(b){-}\gamma'(a)|$, where\hfil\penalty-10000
$a{=}y{-}(t_1{-}t_0{+}p{-}m){+}t_1{=}t_0{+}y{-}p{+}m$, $b{=}t_0{+}y$.\hfil\penalty-10000
If $b<a$ then $b=a{+}(b{-}a)=a{+}(p{-}m)\geqslant a{+}({-}1)$ by Proposition \ref{prop4},
hence $b=a{-}1$ and\hfil\penalty-10000
 $\s w_\beta(a)=|\gamma(a)$-$\gamma'(a)|\leqslant|\gamma(a)$-$\gamma(a{-}1)|{+}|\gamma(b)$-$\gamma'(a)|\leqslant1{+}Y\leqslant 2Y{+}1$,\hfil\penalty-10000
so $a{\in}J{\cap}\s{sJ}(\beta)$ as claimed.\hfil\penalty-10000
If $b\geqslant a$ then, using the assumption that $\beta$ is a fork, we have:\hfil\penalty-10000
$b=|\gamma(b)$-$\gamma(0)|\leqslant
|\gamma(b)$-$\gamma'(a)|{+}|\gamma'(a)$-$\gamma'(0)|{+}|\gamma'(0)$-$\gamma(0)|
\leqslant Y{+}a{+}1$,
and thus\hfil\penalty-10000
 $|b{-}a|=b{-}a\leqslant Y{+}1$,
$\s{w}_\beta(a)\leqslant|\gamma(a){-}\gamma(b)|{+}|\gamma(b)$-$\gamma'(a)|\leqslant 2Y{+}1$,
which means that $a$ is a small jumper: $a{\in}J{\cap}\s{sJ}(\beta)$.\end{proof}
\vskip3pt
\section{Regular segments and small jumpers}
\subsection{Dense value of the width function}
In addition to the assumptions on $S$  already made (including $\s A_{Y,\theta}$), we assume
that there exist numbers
$Z{\in}\Bbb N_{>0}$ and $\lambda{\in}(0,1){\subset}\Bbb R$
such that, for every bigon
$\alpha{\in}\s{Bg}S$ the following inequality holds (see the statement $\s B$ of Theorem 1 in Subsection 1.8):\hfil\penalty-10000
$\s B_{Z,\lambda}:$\quad$\#\s{w}_\alpha^{-1}(\Bbb R_{>Z})<\Frac\lambda{4Y\overset{\vphantom.}{+}2}$.\hfil\penalty-10000
Until the end of the proof of Theorem 1, such numbers $Z$ and $\lambda$ will be fixed.\vskip3pt

We also assume that a fixed number $\nu{\in}(\lambda,1){\subset}\Bbb R$ (as in the Dense Value Lemma) is chosen.
We are going to apply the Dense Value Lemma for the numbers
$a_-=a_+=2Y{+}1$, $Z$ from $\s B_{Z,\lambda}$ and $\varepsilon=\Frac\lambda{4Y\overset{\vphantom.}{+}2}$,
$\nu{\in}(\lambda,1)$.

It follows from the Dense Value Lemma that
there exists $R\in\Bbb N_{>0}$ such that,
if the length $l$ of a band $\beta{=}(\alpha,\alpha')$ is greater then $R$ and
$\s w_\beta(0)$ and $\s w_\beta(l)$ do not exceed $2Y{+}1$, then
there exists $p{\in}\Bbb N_{\leqslant D}$, where $D=aZ{+}a_-=(2Y{+}1)(2Z{+}1)$,
such that $\#\s w_\beta^{-1}p\geqslant\varrho{\cdot}l$, where $\varrho\leftrightharpoons\Frac{1{-}\nu}{D{+}1}$.

Until the end of the proof of Theorem 1 the symbols $D$ and $\varrho$ denote the numbers
just defined.
\subsection{Regular segments for forks}
We will use the integer part function $\Bbb R_{\geqslant0}\to\Bbb N$:
for $r{\in}\Bbb R_{\geqslant 0}$, let $r_{\Bbb N}\leftrightharpoons\s{max}(\Bbb N_{\leqslant r})$.

Keeping the notations define recurrently a sequence of numbers $\{n_i:i{\in}\Bbb N\}$ as follows:
\begin{equation}\label{e4}
n_0\leftrightharpoons6,\quad
 n_{i+1}\leftrightharpoons1{+}\mkern-4mu\left(\Frac{2n_i{+}1}{1{\overset{\vphantom.}-}\theta}\right)_{\mkern-5mu\Bbb N}.\end{equation}
Further, until the end of the proof of Theorem 1, let
\begin{equation}\label{e5}
N\leftrightharpoons n_{D+1},\quad \mu\leftrightharpoons\Frac N{N\overset{\vphantom.}{-}1},\quad
K\leftrightharpoons1{+}(\s{log}_\mu(2/\varrho))_{\Bbb N}.\end{equation}

\textbf{Definition} ($p$-density and $p$-regularity).
For a fork $\beta$, a segment $I{\subset}\s I_\beta$, and a value $p{\in}\Bbb N$,
the number
 $\s d_{\beta,p}I\leftrightharpoons\Frac{\#(I{\cap}\s{w}_\beta^{-1}p))}{|I|}$
is called the $p$\it-density \rm of $I$ with respect to $\beta$.

A value $p$ is called \it dense value \rm(of the width function $\s w_\beta$ on $I$),
if the $p$-density of $I$ with respect to $\beta$ is at least $\varrho$, that is
 $\#\left(I{\cap}\s{w}_\beta^{-1}p\right)\geqslant\varrho{\cdot}|I|$ where $\varrho=\Frac{1-\nu}{D+1}$ (see the Dense Value Lemma).

For a dense value $p$ of $\s w_\beta$ on $I$,
we call an arbitrary segment $J{\subset}I$ (whose endpoints are not necessarily integers)
a $(\beta,p)$\it-regular\rm, if, after the subdivision of $J$ into $N$ \textbf{pairwise congruent}
subsegments (called \textit{pieces of} $J$) each such piece contains at least
one $(\beta,p)$-jumper.

\subsection{Existence of regular segments}
\begin{prop}\label{p6} Let $\beta,I,p,\varrho$ as above.
If $|I|\geqslant N^K$, $p\leqslant D$ then there exists a $(\beta,p)$-regular segment $J{\subset}I$
of length ${\geqslant}\Frac{|I|}{N^K}$.\end{prop}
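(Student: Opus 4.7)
The plan is a pigeonhole iteration, argued by contradiction. Suppose that $I$ contains no $(\beta,p)$-regular subsegment of length $\geq |I|/N^K$. I will construct a nested sequence $I = I_0 \supset I_1 \supset \cdots \supset I_K$ in which each $I_{i+1}$ is one of the $N$ pairwise congruent pieces of $I_i$, so that $|I_i| = |I|/N^i$. The goal is to show that the $p$-density grows by a factor $\mu = N/(N-1)$ at each step, so that $\s d_{\beta,p}I_K \geq \mu^K \varrho$; by the choice of $K$, this density will exceed what any interval of length $\geq 1$ can accommodate.

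For the inductive step, each $I_i$ with $i < K$ has length $|I_i| = |I|/N^i \geq |I|/N^K$, so by the standing assumption $I_i$ is not $(\beta,p)$-regular. Hence in the standard subdivision of $I_i$ into $N$ congruent pieces $J_1,\dots,J_N$, at least one piece, say $J_{j_0}$, contains no $p$-jumper. The $\#(I_i\cap\s w_\beta^{-1}p)$ jumpers of $I_i$ are therefore spread over the remaining $N-1$ pieces (a jumper lying on a shared endpoint of two pieces is counted in each, but this only inflates the total), so by pigeonhole some piece $I_{i+1}$ satisfies
\[
\#(I_{i+1}\cap\s w_\beta^{-1}p)\;\geq\;\frac{\#(I_i\cap\s w_\beta^{-1}p)}{N-1}.
\]
Dividing by $|I_{i+1}| = |I_i|/N$ yields $\s d_{\beta,p}I_{i+1}\geq\mu\,\s d_{\beta,p}I_i$. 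Combined with the hypothesis $\s d_{\beta,p}I_0\geq\varrho$ (since $p$ is a dense value on $I$), induction gives $\s d_{\beta,p}I_K\geq\mu^K\varrho$.

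To finish, I would verify that $\mu^K\varrho>2$: from $K = 1 + (\s{log}_\mu(2/\varrho))_{\Bbb N}$ one has $K>\s{log}_\mu(2/\varrho)$, whence $\mu^K>2/\varrho$. But $|I_K| = |I|/N^K \geq 1$ by the hypothesis $|I|\geq N^K$, and any interval of length $L\geq 1$ contains at most $L+1\leq 2L$ integers, so $\s d_{\beta,p}I_K\leq 2$, contradicting $\s d_{\beta,p}I_K\geq\mu^K\varrho>2$. The only step that needs genuine care is the density-amplification estimate, specifically the observation that jumpers at shared endpoints of adjacent pieces do not break the pigeonhole argument; everything else is routine bookkeeping with the constants $\mu$, $\varrho$, $K$, and $N$.
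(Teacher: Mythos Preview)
Your argument is correct and essentially identical to the paper's: both iterate the density-amplification step (one empty piece forces another piece of density at least $\mu$ times the current one) and terminate using the bound $\s d_{\beta,p}I_k\leq(|I_k|{+}1)/|I_k|\leq 2$ together with $\mu^K\varrho>2$. The only cosmetic difference is that the paper presents the iteration directly (stopping as soon as some $I_k$ is regular), whereas you phrase it as a contradiction assuming no $I_k$ is regular for $k\leq K$.
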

\begin{proof}
Subdivide the segment $I$ in $N$ pieces.
If $I$ is not $(\beta,p)$-regular then at least one of the pieces does not intersect $\s w_\beta^{-1}p$
then the $p$-density of at least one of the other pieces is ${\geqslant}\mu\varrho$
since otherwise
 $\#(I{\cap}\s{w}_\beta^{-1}p)<(N{-}1){\cdot}\Frac{|I|}N{\cdot}\mu\varrho=\varrho|I|$
contradicting the definition of $p$-dense value.
Denote this piece of density ${\geqslant}\mu\varrho$ by $I_1$.

If $I_1$ is not $p$-regular then we repeat the argument: subdividing $I_1$ into
$N$ pieces, we find a piece $I_2$ of $I_1$ with $p$-density ${\geqslant}\mu^2\varrho$.

This process of ``subdivision with increasing $p$-density'' cannot contain more than
$\s{log}_\mu(2/\varrho)$ steps, since, for $k{\leqslant}K$, one has\hfil\penalty-10000
 $|I_k|=|I|/N^k\geqslant1$
and $\mu^k\varrho\leqslant\s d_{\beta,p}I_k=\Frac{\#(I_k{\cap}\s{w}_\beta^{-1}p)}{|I_k|}\leqslant
\Frac{|I_k|{+}1}{|I_k|}\leqslant2$,
so $k\leqslant\s{log}_\mu(2/\varrho)$.\end{proof}
\subsection{Existence of small jumper inside a regular segment}
We keep the notation for the constants $Y,Z,\theta,\varrho,D,N$ etc.
\begin{prop}\label{p7} For every fork $\beta$, the interior of every $(\beta,p)$-regular segment $I$
 of length $\geqslant\Frac{DN(1{+}\theta)}{1{\overset{\vphantom.}-}\theta}$
intersects $\s{sJ}(\beta)$.\end{prop}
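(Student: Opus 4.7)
My plan is a contradiction argument. Assume the interior of $I$ contains no element of $\s{sJ}(\beta)$. Set $\ell = |I|/N$; the length hypothesis and $p \leqslant D$ yield $\ell(1-\theta) \geqslant (1+\theta)p$, and in particular $\ell \geqslant (1+\theta)p$. We may also assume $p \geqslant 2Y+2$, for otherwise every $p$-jumper is itself a small jumper, and any one lying in an interior piece of $I$ already gives the conclusion. By $(p,\beta)$-regularity, choose a $p$-jumper $t_j$ in the $j$-th piece of $I$ for each $j \in \{1,\dots,N\}$.

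The heart of the proof is an inductive construction of a nested chain of $(p,\beta)$-equilateral subsegments
$$E^{(D+1)} \supset E^{(D)} \supset \cdots \supset E^{(i_0)},$$
each $E^{(i)}$ spanning $n_i$ consecutive pieces of $I$ with endpoints of the form $t_j$, such that the ranks $\s{rk}_{p,\beta}(E^{(i)})$ strictly decrease. Start with $E^{(D+1)} := [t_1, t_N]$, whose rank is $\leqslant 1+p \leqslant 1+D$ by Proposition \ref{prop4}. Given $E^{(i+1)}$ spanning pieces $a+1, \dots, a+n_{i+1}$ of $I$, decompose it as $J_- \circ J \circ J_+$ with $J_- := [t_{a+1}, t_{a+n_i}]$, $J_+ := [t_{a+n_{i+1}-n_i+1}, t_{a+n_{i+1}}]$, and $J$ the middle equilateral segment. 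Rearranging the recurrence $(1-\theta)n_{i+1} \geqslant 2n_i + 1$ gives $n_{i+1}-2n_i \geqslant \theta n_{i+1} + 1$, whence
$$|J| \geqslant (n_{i+1}-2n_i)\ell \geqslant \theta n_{i+1}\ell + \ell \geqslant \theta|E^{(i+1)}| + (1+\theta)p,$$
using $|E^{(i+1)}| \leqslant n_{i+1}\ell$ and $\ell \geqslant (1+\theta)p$. This is exactly the length hypothesis of the Rank Decay Lemma. Since $J$ lies entirely in the interior of $I$ and by assumption contains no small jumper, combining Rank Decay (in the case $\s{rk}(J_-) = \s{rk}(J_+)$) with Proposition \ref{prop3} (in the case they differ) yields $\min\{\s{rk}(J_-), \s{rk}(J_+)\} \leqslant \s{rk}(E^{(i+1)}) - 1$. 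Take $E^{(i)}$ to be whichever of $J_\pm$ realizes this strict decrease.

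Since the ranks are nonnegative integers strictly decreasing at each step from an initial value $\leqslant 1+p$, the iteration terminates after at most $1+p$ steps at some level $i_0 \geqslant D - p \geqslant 0$ with $\s{rk}(E^{(i_0)}) = 0$ and $n_{i_0} \geqslant n_0 = 6$. Finally, apply the Rank Decay Lemma to $E^{(i_0)}$ in its degenerate form: let $J'_\pm$ be the two endpoint $p$-jumpers of $E^{(i_0)}$ (singleton segments of rank $0$) and $J' := E^{(i_0)}$. The length condition reduces to $|E^{(i_0)}|(1-\theta) \geqslant (1+\theta)p$, which follows from $|E^{(i_0)}| \geqslant \ell \geqslant (1+\theta)p/(1-\theta)$. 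Rank Decay then yields either $\s{rk}(E^{(i_0)}) \geqslant 1$ (excluded) or a small jumper in $J'$; the reduction $p \geqslant 2Y+2$ forces the latter to differ from the $p$-jumper endpoints, hence to lie strictly inside $E^{(i_0)} \subseteq I$, contradicting the assumption.

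The main obstacle is the bookkeeping: verifying that the recurrence $n_{i+1} = 1 + \lfloor(2n_i+1)/(1-\theta)\rfloor$ delivers precisely the length condition for Rank Decay at every level, and that the entire descent from rank $\leqslant 1+p$ to rank $0$ plus one final degenerate application fits within the $D+1$ available levels. Both hinge on the two uniform inequalities $(1-\theta)n_{i+1} \geqslant 2n_i + 1$ and $\ell(1-\theta) \geqslant (1+\theta)p$, the second coming from $p \leqslant D$ and the length hypothesis on $I$.
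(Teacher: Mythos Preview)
Your argument is correct and follows essentially the same strategy as the paper's proof. The paper packages the argument as an auxiliary inductive statement (Proposition~\ref{p8}): for every $I$-entire subsegment $J$ of reduced length $n_k$ whose maximal equilateral subsegment has rank $\leqslant k$, there is a small jumper in that subsegment avoiding $\partial J$; the induction runs upward on $k$ with a direct bigon construction at $k=0$. Your version is the contrapositive descent: fix a jumper $t_j$ in each piece, start from $E^{(D+1)}=[t_1,t_N]$, and run the rank down using Rank Decay at each level, finishing with a degenerate application (singleton $J'_\pm$) once rank $0$ is reached. The numerical core --- deriving $|J|\geqslant\theta|E^{(i+1)}|+(1{+}\theta)p$ from $(1{-}\theta)n_{i+1}>2n_i+1$ together with $\ell\geqslant D(1{+}\theta)/(1{-}\theta)\geqslant p(1{+}\theta)/(1{-}\theta)$ --- is identical in both arguments. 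Two small points worth making explicit in your write-up: (i) the bound $|E^{(i_0)}|\geqslant\ell$ actually holds with room to spare, since $E^{(i_0)}$ spans $n_{i_0}\geqslant6$ pieces and hence has length $\geqslant(n_{i_0}-2)\ell\geqslant4\ell$; (ii) your preliminary reduction to $p\geqslant2Y{+}2$ is what replaces the paper's device of choosing $H'$ with endpoints in the second and fifth (rather than first and sixth) pieces to keep the small jumper off $\partial J$.
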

\begin{proof}
Let $\Cal P\leftrightharpoons I{\cap}\s w_\beta^{-1}p$.
Recall that a subsegment $J$ of $I$ with $\partial J{\subset}\Cal P$ is $(p,\beta)$\textit{-equilateral}.\par
Let us say that a subsegment $J$ of $I$ is \textit{entire} if it is a concatenation of pieces of $I$ (see the definition in Subsection 4.2).
The number of pieces of $I$ in a subsegment $J$ entire in $I$ we call $I$\textit{-reduced length}
of $J$. We denote the $I$-reduced length of $J$ by $\s{rl}_IJ$.

In particular $\s{rl}_II{=}N$.

The statement of Proposition \ref{p7} is a particular case (with $k{=}p{+}1$)
of the following
\begin{prop}\label{p8}
Let $I$ be a $(\beta,p)$-regular segment of length ${\geqslant}\Frac{DN(1{+}\theta)}{1{\overset{\vphantom.}-}\theta}$,
let $k{\in}\Bbb N_{\leqslant K}$ and
$J$ be an entire subsegment of $I$ having $I$-reduced length
$n_k$, where $n_k$ satisfies (\ref{e4}).
Let $H$ be the maximal $(p,\beta)$-equilateral subsegment of $J$.
If $\s{rk}_{p,\beta}H\leqslant k$ then
there exists a small jumper in $H{\setminus}\partial J$.\end{prop}
\textit{Proof.} We proceed by induction on $k$.\par For $k{=}0$ we have $\s{rl}_IJ{=}n_0{=}6$.
Using the regularity of $I$ choose one $p$-jumper in the \textbf{second} $I$-piece of $J$
 and one $p$-jumper in the \textbf{fifth} $I$-piece of $J$.
Denote by $H'$ the $p$-equilateral segment whose endpoints are the choosen $p$-jumpers.
So $H'{\cap}\partial J{=}\varnothing$ and it suffices to find a small jumper in $H'$.

We have: $|H'|\geqslant2{\cdot}\Frac{|J|}6
\geqslant\Frac{2D(1{+}\theta)}{1{\overset{\vphantom.}-}\theta}\geqslant
e{\leftrightharpoons}\Frac{p(1{+}\theta)}{1{\overset{\vphantom.}-}\theta}$ (see 4.1).
Hence\hfil\penalty-10000
$|H'|(1{-}\theta)\geqslant p(1{+}\theta)$, $|H'|\geqslant p(1{+}\theta){+}\theta|H'|$,
$|H'|{-}p\geqslant\theta(|H'|{+}p)$.

Let $H'{=}[t_0,t_1]$.
By joining the points $\gamma(t_\iota)$ and $\gamma'(t_\iota)$
by geodesic segments we obtain, by `$\s{rk}_pH'{=}0$' and Proposition \ref{prop3},
a bigon $\sigma{=}(\sigma_0,\sigma_1)$ of length $|H'|{+}p$,
where $\sigma_0$ is the concatenation $[\gamma(t_0),\gamma(t_1)]{\circ}[\gamma(t_1),\gamma'(t_1)]$
and $\sigma_1$ is the
 concatenation $[\gamma(t_0),\gamma'(t_0)]{\circ}[\gamma'(t_0),\gamma'(t_1)]$.

This situation resembles that of the proof of Rank Decay
 Lemma, and we repeat the argument thereof.

The length of the segment
$L{\leftrightharpoons}\{y{\in}\left[0,|\sigma|\right]:\sigma(y){\in}\s{Im}\gamma,\sigma'(y){\in}\s{Im}(\gamma')\}$
is equal to
 $|\sigma|{-}2p=|H'|{-}p\geqslant\theta(|H'|{+}p)=\theta|\sigma|$.\begin{picture}(0,0)(-98,30)
\put(-6,15){\makebox(0,0)[cr]{$\gamma$}}
\put(-5,15){\line(1,0){40}}
\put(-6,-15){\makebox(0,0)[cr]{$\gamma'$}}
\put(-5,-15){\line(1,0){40}}
\put(1,-17){\makebox(0,0)[tc]{$t_0$}}
\put(10,-17){\makebox(0,0)[tc]{$a$}}
\put(26,16.5){\makebox(0,0)[bc]{$b$}}
\put(0,-15){\line(0,1){30}}
\put(0,0){\makebox(0,0)[cr]{$\sigma_1$}}
\put(30,-17){\makebox(0,0)[tc]{$t_1$}}
\put(30,-15){\line(0,1){30}}
\put(10,-15){\line(1,2){15}}
\put(10,-15){\line(0,1){30}}
\put(31,0){\makebox(0,0)[cl]{$\sigma_0$}}
\end{picture}\par

Since $|H'|{-}p\geqslant\Frac{2D(1{+}\theta)}{1{\overset{\vphantom.}-}\theta}{-}D{\geqslant}D{\geqslant}1$
we have $\Bbb N{\cap}L{\ne}\varnothing$.

By $\s A_{Y,\theta}$, there exists $y{\in}\Bbb N{\cap}L$ such that $\s w_\sigma(y){\leqslant}Y$.
We have\hfil\penalty-10000
 $Y\geqslant|\sigma(y)$-$\sigma'(y)|=|\gamma(b)$-$\gamma'(a)|$ where $a{=}t_0{+}y{-}p$,
$b{=}t_0{+}y$.\hfill\break Since $\beta$ is a fork, we have\hfill\break
$b{=}|\gamma(b)$-$\gamma(0)|\leqslant|\gamma(b)$-$\gamma'(a)|+|\gamma'(a)$-$\gamma'(0)|+|\gamma'(0)$-$\gamma(0)|\leqslant
Y{+}a{+}1$,\hfill\break $|b{-}a|{=}b{-}a\leqslant Y{+}1$,
 $\s w_\beta(a)\leqslant|\gamma(a)$-$\gamma(b)|{+}|\gamma(b)$-$\gamma'(a)|{\leqslant}2Y{+}1$.

The small jumper $a{\in}H'$ is different from the endpoints of $J$ by the choice of $H'$.
\vskip3pt

Suppose that $k{>}0$ and that, for the values less than $k$, the assertion of Proposition \ref{p8}
is true.

Let $J$ be an $I$-entire subsegment of $I$ and $\s{rl}_IJ{=}n_k$.
Express $J$ as a concatenation $J_-{\circ}J_0{\circ}J_+$ of $I$-entire subsegments so that
$\s{rl}_IJ_-{=}\s{rl}_IJ_+{=}n_{k-1}$.
Let $H_-,H_+$ denote the maximal $(p,\beta)$-equilateral subsegments of $J_-$ and $J_+$ respectively.
By Proposition \ref{prop3}, $\s{rk}_{p,\beta}H_\pm{\leqslant}k$.
If the $(p,\beta)$-rank of at least one of the segments $H_\pm$ is less than $k$ then
it contains a small jumper different from each of the endpoints of $J$
by the hypothesis of induction.

If $\s{rk}_pH_-=\s{rk}_pH_+=k$, then we will verify the hypothesis of the Rank Decay Lemma.

Let $H_-{=}[t_0,t_1]$, $H_+{=}[t_2,t_3]$.
By the choice of $H_\pm$, $t_2{-}t_1\geqslant e(n_k{-}2n_{k-1})$.
It follows from $(\ref{e4})$ that
$n_k>\Frac{2n_{k-1}{+}1}{1{\overset{\vphantom.}-}\theta}$,
 $(1{-}\theta)n_k>2n_{k-1}{+}1$,\hfil\penalty-10000
 $n_k{-}2n_{k-1}>\theta n_k{+}1$, thus\hfil\penalty-10000
$t_2{-}t_1>e(\theta n_k{+}1)=e\theta n_k{+}e=
\theta|J|{+}e=\theta|J|{+}\Frac{p(1{+}\theta)}{1{\overset{\vphantom.}-}\theta}>\theta|J|{+}(1{+}\theta)p$,
which is required in Rank Decay Lemma.
The segment $[t_1,t_2]$ contains a small jumper that can not be
an endpoint of $J$.

Propositions \ref{p7} and \ref{p8} are proved.\end{proof}
\section{Proof of Theorem 1}
Using the assumptions $\s A$ and $\s B$ we find numbers  $\theta,\lambda\in(0,1)\subset\Bbb R$
such that
every bigon $\beta{\in}\s{Bg}S$ satisfies the conditions:
\vskip5pt
\noindent $\s A_{Y,\theta}(\beta):$\quad$\#\s{w}_\beta^{-1}(\Bbb R_{>Y})<\theta{\cdot}|\beta|$\ \ and\hfil\penalty-10000
$\s B_{Z,\lambda}(\beta):$\quad$\#\s{w}_\beta^{-1}(\Bbb R_{>Z})<\Frac\lambda{4Y\overset{\vphantom.}{+}2}{\cdot}|\beta|$.

\vskip5pt
We fix these constants keeping the previous notations.

For a bigon $\beta{\in}\s{Bg}S$ the endpoints $0$ and $|\beta|$ belong to $\s{sJ}{=}\{$the small jumpers of $\beta\}$.
Thus, to conclude that the graph $S$ is hyperbolic it suffuces, by $(*)$ of 1.5,
to find a constant $C$ such that
for every bigon $\beta{\in}\s{Bg}S$ the distance between every two \textbf{neighbors} (in the natural ordering)
in $\s{sJ}(\beta)$ is less than or equal to $C$.

We will show that one can take
 $C\leftrightharpoons\s{max}\left\{R,\Frac{DN^{K+1}(1{+}\theta)}{1{\overset{\vphantom.}-}\theta}\right\}$.\par

Let $I\subset\s I_\beta$, $\partial I\subset\s{sJ}(\beta)$, $|I|\geqslant C$.
Since $|I|\geqslant R$, by the Dense Value Lemma, there exists a dense value $p$ of the function $\s w_\beta|_I$.
Since
 $|I|\geqslant\Frac{DN^{K+1}(1{+}\theta)}{1{\overset{\vphantom.}-}\theta}$,
by Proposition \ref{p6}, there exists a $(\beta,p)$-regular subsegment $J\subset I$
of length $\geqslant\Frac{|I|}{N^K}\geqslant\Frac{DN(1{+}\theta)}{1{\overset{\vphantom.}-}\theta}$.
By Proposition \ref{p7} there exists $t{\in}\s{sJ}(\beta){\setminus}\partial J$,
and so $t{\in}I{\setminus}\partial I$. Hence the points of $\partial I$ can not be \textbf{neighbors}
in $\s{sJ}(\beta)$.

Theorem 1 is proved.
\vskip3pt
Since $\s{TAS}$ implies both conditions $\s A$ and $\s B$ of Theorem 1, we obtain

\sc Corolary \sl Every graph satisfying $\s{ TAS}$ is hyperbolic\rm.
\vskip3pt
\section{Van Kampen area of geodesic bigons}
We will deduce $\s{TAS}$ from the boundness of the set (\ref{vk}) (Subsection 1.7).
\subsection{$\square$-inequality}
Recall that the Van Kampen area of a closed path $\gamma$ in the Cayley graph related to
a finite presentation $\langle X;\Cal R\rangle$ is the smallest $n{\in}\Bbb N$ such that
the element $\widetilde\gamma$ of the free group $F=\langle X\rangle$
corresponding to $\gamma$ is expressible in $F$ in the form
$\widetilde\gamma=\prod_{i=1}^ng_i^{-1}r_ig_i$, where $r_i{\in}\Cal R$ (the set $\Cal R{\subset}F$ is supposed to
be symmetric).

It is well-kown (see for example \cite[Proposition III.9.2]{LSh}), that the Van Kampen area
is equal to the minimal number of cells in the Van Kampen diagrams with the boundary label $\widetilde\gamma$.

We will use the following property of the Van Kampen area proved by Bowditch \cite[Proposition 5.7., p 108]{Bo},
which we call the $\square$\it-inequality\rm.
Originally it was formulated for an arbitrary path-metris space $S$
and a function $\s a:\s{rL}(S){\leftrightharpoons}\{$the rectifiable closed paths $\Bbb S^1\to S\}\to\Bbb R_{\geqslant0}$ where $\Bbb S^1$ is
the standard circle $\{z{\in}\Bbb C:|z|{=}1\}$.
\vskip3pt
$(\square)$\sl There exists $\omega{\in}\Bbb R_{>0}$
such that for every rectifiable\begin{picture}(0,0)(-60,25)
\put(-21,21){\makebox(0,0)[rb]{$P_0$}}
\put(21,21){\makebox(0,0)[lb]{$P_1$}}
\put(20,-18){\makebox(0,0)[lt]{$P_2$}}
\put(-20,-18){\makebox(0,0)[rt]{$P_3$}}
\put(-20,20){\line(1,0){40}}
\put(-20,20){\line(0,-1){40}}
\put(20,-20){\line(-1,0){40}}
\put(20,-20){\line(0,1){40}}
\end{picture}\hfil\penalty-10000
 closed curve
$\Bbb S^1\overset\psi\to S$
and every subdivision
 of $\Bbb S^1$ into four\hfil\penalty-10000
 segments
 $[P_0,P_1]$, $[P_1,P_2]$, $[P_2,P_3]$,$[P_3,P_0]$,
then
$\s{a}(\psi){\geqslant}\omega{\cdot}d_0{\cdot}d_1$,\hfil\penalty-10000
 where
 $d_0{=}\s{dist}(\psi[P_0,P_1],\psi[P_2,P_3])$,\hfil\penalty-10000
 $d_1{=}\s{dist}(\psi[P_1,P_2],\psi[P_3,P_0])$\rm.
 \vskip3pt
\sc Remark\rm. In the cited paper \cite{Bo} Bowditch gave an axiomatic definition
of an ``area-like'' function $\s a$ on $\s{rL}(S)$ where
$\square$-inequality is one of the axioms. The other axiom
is an analog of the $\triangle$-inequality.
It is proved in \cite{Bo} that the boundness of
$\left\{\Frac{\s a(\gamma)}{\s{length}(\gamma)}:\gamma{\in}\s{rL}(S)\right\}$,
where $\s a$ is a function satisfying the $\triangle$-inequality and the $\square$-inequality for any closed rectifiable $\gamma$, implies the hyperbolicity of the space $S$.

Bowditch also proved that the Van Kampen area for any finite presentation satisfies the axioms.

Clearly a ``discrete'' version of $\square$-inequality also holds for every finite presentation $\Cal P$
(where the constant $\omega$ depends on $\Cal P$).

In our case we assume the boundness of the smaller set (\ref{vk}) where $\gamma$ is the union of two geodesic segments (see 1.7).

The Bowditch area axioms are not sufficient for our proof.

It is an interesting question whether a version of Theorem 2 where the Van Kampen
area is replaced by a
Bowditch area function is still true.
\subsection{Special pieces of a Van Kampen diagram for a bigon}
We will prove that if a graph $S$ does not satisfy $\s{TAS}$ then the set (\ref{vk}) is not bounded.

Assume that the $\s{TAS}$ condition does not hold.
Thus there exists $\varepsilon{>}0$ such that
\begin{equation}\label{noTAS}
\forall d\in\Bbb R\ \exists\beta\in\s{Bg}S:
\#\s w_\beta^{-1}(\Bbb N_{>d})>\varepsilon{\cdot}|\beta|.\end{equation}
We fix such $\varepsilon$ until the end of the proof of Theorem 2.

Let $d$ be a positive real number and $\beta{=}\beta_d{=}(\gamma,\gamma')$ be a bigon with\hfil\penalty-10000
 $\#\s w_\beta^{-1}(\Bbb N_{>d})>\varepsilon{\cdot}|\beta|$.
We will show that $\Frac{\s{a}(\beta)}{|\beta|}>\epsilon d$ for some $\epsilon$ depending only
on $\varepsilon$ and $\Cal P$.

It is convenient to extend the width function $\s w_\beta$ to
a \textbf{contiunous} function defined on the whole segment $\s I_\beta{\subset}\Bbb R$.

Recall that a Van Kampen diagram for a symmetric
presentation $\mathcal P{=}\langle X;\mathcal R\rangle$ is finite two-dimensional $\s{CW}$-complex $W$
homeomorphic to a singular disk contained in the sphere $\Bbb S^2$ together with
a simplicial (labelling) map $W^1\overset\kappa\to S$ that
maps edges to edges such that the boundary label of each 2-cell is a defining relation ${\in}\mathcal R$
\cite[Section III.9]{LSh}.

Consider a Van Kampen diagram with the boundary label $\gamma^{-1}{\circ}\gamma'$.
Since $\gamma$ and $\gamma'$ are geodesic words of equal length,
the only possible singularity
on $\partial W$ is the coincidence of vertices $\gamma(x)$ and $\gamma'(x)$ for some values $x{\in}\s I_\beta$,
 as indicated on the picture below.\par
\begin{picture}(0,85)(-1,-85)
\put(90,-13){\makebox(0,0)[cc]{$\gamma(x_1)$}}
\put(90,-18){\vector(0,-1){22}}
\put(90,-71){\makebox(0,0)[cc]{$\gamma'(x_1)$}}
\put(90,-66){\vector(0,1){19}}
\put(241,-13){\makebox(0,0)[cc]{$\gamma(x_2)$}}
\put(241,-19){\vector(0,-1){22}}
\put(241,-71){\makebox(0,0)[cc]{$\gamma'(x_2)$}}
\put(241,-66){\vector(0,1){19}}
\put(-14,-70){
\pdfximage{\mpPath 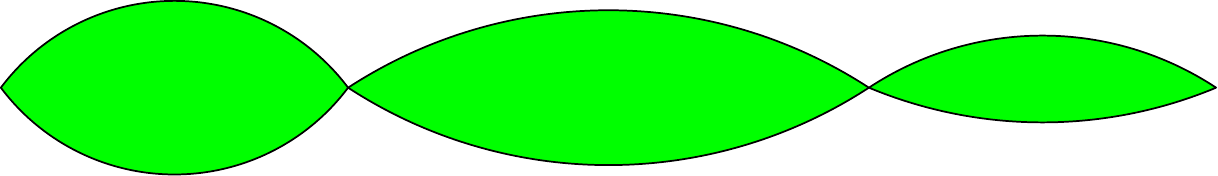}\pdfrefximage\pdflastximage}
\end{picture}\vskip65pt
If the inequality `$\#\s w_\beta^{-1}(\Bbb N_{>d})>\varepsilon{\cdot}|\beta|$' holds for $\beta$ then
it holds for the closure of one of the connected
components of $W{\setminus}\{$the singular points\}. Thus we can assume that the path $\gamma^{-1}{\circ}\gamma'$
is a \bf simple loop \rm and the complex $W$ is \bf homeomorpic \rm to the disk $\Bbb D^2$.

On the \bf graph \rm $W^1$ (the one-dimensional skeleton of $W$)
we fix the simplicial metric with the length of each edige equal 1.
We denote the corresponding distance between points $p,q{\in}W^1$ by
$|p$-$q|_W$. This is the minimal length of paths in $W^1$ from $p$ to $q$.

The map $\kappa$ sends edges to edges, so it does not increase the distance.
Since the segments $\gamma$ and $\gamma'$ are geodesic in $S$, they are also geodesic in $W^1$.
Hence without loss of generality we can assume that $\kappa$ is the identity map on the images of $\gamma$ and $\gamma'$.

For $x{\in}\s I_\beta$, denote by $\widetilde{\s w}_\beta(x)$
the distance $|\gamma(x)$-$\gamma'(x)|_W$.
So, we have\hfil\penalty-10000
 $\widetilde{\s w}_\beta(x){\geqslant}\s w_\beta(x)$ for each $x{\in}\s I_\beta$.
We also have $\s w_\beta^{-1}(\Bbb R_{>d}){\subset}\widetilde{\s w}_\beta^{-1}(\Bbb R_{>d})$. Hence
$\#\widetilde{\s w}_\beta^{-1}(\Bbb N_{>d})>\varepsilon{\cdot}|\beta|$.

For $x{\in}\widetilde{\s w}_\beta^{-1}(\Bbb R_{>d})$ denote by $J_x(\beta,d)$ the maximal \textbf{open} segment
 ${\subset}\s I_\beta$ such that every its point $t$ satisfies the inequality
$\widetilde{\s w}_\beta(y)>{1\over5}d$. Then, by continuity of $\widetilde{\s w}_\beta$, for the boundary points
$y{\in}\partial J_x(\beta,d)$ we have the equality\hfil\penalty-10000
$\widetilde{\s w}_\beta(y)={1\over5}d$.

\begin{prop}\label{p9} $|J_x(\beta,d)|>{4\over5}d$.\end{prop}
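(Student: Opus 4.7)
The plan is to exploit that $\widetilde{\s w}_\beta$ is $2$-Lipschitz on $\s I_\beta$, and then read off the length of $J_x(\beta,d)$ from the values of $\widetilde{\s w}_\beta$ at the interior point $x$ and at the boundary of $J_x(\beta,d)$.

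\textbf{Step 1 (Lipschitz estimate).} The paths $\gamma,\gamma'$ are simplicial in $W^1$, hence $1$-Lipschitz for the metric $|{\cdot}{-}{\cdot}|_W$: $|\gamma(x){-}\gamma(y)|_W\leqslant|x{-}y|$ and similarly for $\gamma'$. By the triangle inequality in $W^1$,
\[
\widetilde{\s w}_\beta(x)=|\gamma(x){-}\gamma'(x)|_W\leqslant|\gamma(x){-}\gamma(y)|_W+|\gamma(y){-}\gamma'(y)|_W+|\gamma'(y){-}\gamma'(x)|_W\leqslant2|x{-}y|+\widetilde{\s w}_\beta(y).
\]

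\textbf{Step 2 (Boundary values of $J_x(\beta,d)$).} Because $\beta$ is a bigon, $\gamma(0){=}\gamma'(0)$ and $\gamma(|\beta|){=}\gamma'(|\beta|)$, so $\widetilde{\s w}_\beta$ vanishes at the endpoints of $\s I_\beta$. Hence the maximal open segment $J_x(\beta,d){\subset}\s I_\beta$ on which $\widetilde{\s w}_\beta>\tfrac15d$ is properly contained in $\s I_\beta$. By continuity of $\widetilde{\s w}_\beta$ and maximality, each of the two endpoints $y_-,y_+$ of $J_x(\beta,d)$ satisfies $\widetilde{\s w}_\beta(y_\pm)=\tfrac15d$, as already remarked in the excerpt.

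\textbf{Step 3 (Conclusion).} Applying the inequality of Step 1 at $y{=}y_\pm$ together with $\widetilde{\s w}_\beta(x)>d$ yields
\[
d<\widetilde{\s w}_\beta(x)\leqslant 2|x{-}y_\pm|+\tfrac15d,\qquad\text{hence}\qquad|x{-}y_\pm|>\tfrac25d.
\]
Since $x$ lies strictly between $y_-$ and $y_+$,
\[
|J_x(\beta,d)|=|x{-}y_-|+|x{-}y_+|>\tfrac25d+\tfrac25d=\tfrac45d,
\]
as required.

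There is no real obstacle here; the only subtlety is confirming that both endpoints $y_\pm$ lie in the open interior of $\s I_\beta$ so that the equality $\widetilde{\s w}_\beta(y_\pm){=}\tfrac15d$ is available, and this is exactly what Step 2 ensures via the bigon condition and continuity of $\widetilde{\s w}_\beta$.
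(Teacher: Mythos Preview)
Your proof is correct and follows essentially the same approach as the paper's. The paper argues by contradiction (assuming $x-x_-\leqslant\tfrac25d$ and deriving $\widetilde{\s w}_\beta(x)\leqslant d$), while you isolate the $2$-Lipschitz estimate first and argue directly; this is purely a cosmetic difference, as the same triangle inequality drives both.
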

\begin{proof} Let $J_x(\beta,d){=}(x_-,x_+)$, so that $x_-{<}x{<}x_+$.
It suffices to show that both numbers $x{-}x_-$ and $x_+{-}x$ are greater than
 ${2\over5}d$.\hfil\penalty-10000
The proof is the same for both numbers.
So we only give it for $x{-}x_-$.\par
If there were $x{-}x_-\leqslant{2\over5}d$, then, by $\triangle$-inequality we would have:\hfil\penalty-10000
 $\widetilde{\s w}_\beta(x)=|\gamma(x)$-$\gamma'(x)|_W\leqslant
|\gamma(x)$-$\gamma(x_-)|{+}|\gamma(x_-)$-$\gamma'(x_-)|_W{+}|\gamma'(x_-)$-$\gamma'(x)|=$\hfil\penalty-10000
$=
2(x{-}x_-){+}\widetilde{\s w}_\beta(x_-)\leqslant2{\cdot}{2\over5}d{+}{1\over5}d=d$,\hfil\penalty-10000
contradicting the assumption `$\widetilde{\s w}_\beta(x)>d$'.\end{proof}

Let $\Cal A_d(\beta){\leftrightharpoons}\{$the connected components of the open set\hfil\penalty-10000
 ${\cup}\{J_x(\beta,d):x{\in}\widetilde{\s w}_\beta^{-1}(\Bbb R_{>d})\}\subset\s I_\beta$.\hfil\penalty-10000
By Proposition \ref{p9} the length of each open segment $I{\in}\Cal A_d(\beta)$ is greater than ${4\over5}d$.
The segments in $\Cal A_d$ are pairwise disjoint and the their union contains
$\widetilde{\s w}_\beta^{-1}(\Bbb R_{>d})$. Hence\begin{equation}\label{sumLength}
\sum\{|I|:I{\in}\Cal A_d(\beta)\}\geqslant\varepsilon{\cdot}|\beta|.\end{equation}

Consider a fixed segment $I=(x_-,x_+){\in}\Cal A_d(\beta)$.
By joining the pairs
$\{\gamma(x_-),\gamma'(x_-)\}$
and $\{\gamma(x_+),\gamma'(x_+)\}$ by geodesic segments in the graph $W^1$,\hfil\penalty-10000
we obtain a geodesic quadrilateral $Q_I$ in $W$, whose one pair of the opposite sides is
a pair of pieces of $\gamma$ and $\gamma'$.

Denote the sides of $Q_I$ by $[\gamma(x_-),\gamma'(x_-)]$, $[\gamma(x_+),\gamma'(x_+)]$, $[\gamma(x_-),\gamma(x_+)]$,
$[\gamma'(x_-),\gamma'(x_+)]$.\hfil\penalty-10000
In order to apply the $\square$-inequality, let us estimate the $W^1$-distance between the
opposite sides of $Q_I$.

\begin{prop} $\s{dist}_W([\gamma(x_-),\gamma(x_+)],[\gamma'(x_-),\gamma'(x_+)])>{1\over10}d$.\end{prop}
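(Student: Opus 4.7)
The plan is to argue by contradiction: fix $p=\gamma(s)\in[\gamma(x_-),\gamma(x_+)]$ and $q=\gamma'(t)\in[\gamma'(x_-),\gamma'(x_+)]$ with $s,t\in[x_-,x_+]$, suppose $\rho\leftrightharpoons|p$-$q|_W\le d/10$, and derive a contradiction using three triangle-inequality lower bounds for $\rho$ in $W^1$. The \emph{direct} bound, obtained by concatenating a shortest $W^1$-path from $p$ to $q$ with the sub-path of $\gamma'$ from $q$ to $\gamma'(s)$, gives $\widetilde{\s w}_\beta(s)\le\rho+|s-t|$, whence $\rho\ge\widetilde{\s w}_\beta(s)-|s-t|\ge d/5-|s-t|$, and symmetrically with $t$ in place of $s$. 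The \emph{boundary} bound uses that $\gamma'$ is a geodesic in $W^1$: the detour $\gamma'(x_-)\to\gamma(x_-)\to p\to q\to\gamma'(x_+)$, of total length $d/5+(s-x_-)+\rho+(x_+-t)$, must be at least $|I|=|\gamma'(x_-)$-$\gamma'(x_+)|_W$, giving $\rho\ge|t-s|-d/5$. The \emph{interior} bound pivots at a hot point $y^*\in I$ with $\widetilde{\s w}_\beta(y^*)>d$ (which exists because $I\in\Cal A_d(\beta)$ contains some $J_x(\beta,d)$): $\rho>d-|y^*-s|-|y^*-t|$.

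The direct bound immediately handles the range $|s-t|<d/10$, and the boundary bound handles $|s-t|>3d/10$; in the edge cases of these ranges ($|s-t|=d/10$ with $s\in\{x_-,x_+\}$, etc.) one uses that $\widetilde{\s w}_\beta>d/5$ strictly on $(x_-,x_+)$, so the direct bound applied at the interior point of $\{s,t\}$ gives strict inequality. The delicate range is $d/10\le|s-t|\le 3d/10$, which I would treat by combining the interior bound with the $2$-Lipschitz property of $\widetilde{\s w}_\beta$ and the offset estimates implicit in Proposition~\ref{p9}. The contradiction hypothesis together with the interior bound forces $|y^*-s|+|y^*-t|>9d/10$, which rules out $y^*\in[\min(s,t),\max(s,t)]$ since $|s-t|\le 3d/10$. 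Assuming WLOG $y^*<\min(s,t)$, the direct bound forces $\widetilde{\s w}_\beta(s)\le 2d/5$, and $2$-Lipschitz continuity with $\widetilde{\s w}_\beta(y^*)>d$ yields $s-y^*>3d/10$; the same continuity applied from $\widetilde{\s w}_\beta(x_\pm)=d/5$ places $y^*\in[x_-+2d/5,x_+-2d/5]$. Applying the symmetric argument at the right end of $I$ pushes both $s$ and $t$ far enough inside $I$ to contradict $|I|>4d/5$ from Proposition~\ref{p9}.

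The main obstacle will be closing this middle-range step rigorously. Each of the three triangle inequalities supplies only one-sided information, so the contradiction requires a careful aggregation of all three together with $2$-Lipschitz regularity of $\widetilde{\s w}_\beta$ and the $2d/5$-offset of hot points from $\partial I$. A clean reformulation is to verify that for every admissible $(s,t)\in[x_-,x_+]^2$ there exists an admissible $y^*\in I$ with $\widetilde{\s w}_\beta(y^*)>d$ such that at least one of the three expressions $d/5-|s-t|$, $|s-t|-d/5$, and $d-|y^*-s|-|y^*-t|$ strictly exceeds $d/10$, where admissibility is the set of constraints cut out by $2$-Lipschitz continuity of $\widetilde{\s w}_\beta$ and the conclusions of Proposition~\ref{p9}.
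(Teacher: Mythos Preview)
Your three bounds are all correct, but the argument for the ``delicate range'' $d/10\leqslant|s-t|\leqslant 3d/10$ does not close. After you deduce $s>y^*+3d/10>x_-+7d/10$ (and similarly for $t$) from a hot point $y^*<\min(s,t)$, you invoke a ``symmetric argument at the right end of $I$''. But $I$ is only guaranteed to contain \emph{one} hot point, and nothing prevents it from lying far to the left while $s,t$ sit near $x_+$ in a very long $I$. In that configuration none of your three inequalities yields $\rho>d/10$: the direct and boundary bounds both degenerate to $\rho\geqslant0$ when $|s-t|=d/5$, and the interior bound is useless once $|y^*-s|+|y^*-t|$ exceeds $9d/10$. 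So the claimed contradiction with $|I|>4d/5$ never materialises; you are pushing $s,t$ to the right of $x_-+7d/10$, which is consistent with, not contrary to, $|I|>4d/5$.

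The missing idea (and the whole of the paper's proof) is to replace your boundary detour through $\gamma(x_-)$, which costs an extra $d/5$, by the detour through the \emph{common origin} $\gamma(0)=\gamma'(0)$ of the bigon. Since $\gamma$ and $\gamma'$ are geodesics in $W^1$, the path $\gamma'(t)\to\gamma'(0)=\gamma(0)\to\gamma(s)\to\gamma'(t)$ gives $t\leqslant s+\rho$, and symmetrically $s\leqslant t+\rho$; hence $|s-t|\leqslant\rho\leqslant d/10$. This single estimate kills the middle and high ranges outright. The paper then finishes with your direct bound applied at the midpoint $y=(s+t)/2\in(x_-,x_+)$:
\[
\tfrac15 d<\widetilde{\s w}_\beta(y)\leqslant|y-s|+\rho+|t-y|\leqslant\tfrac1{20}d+\tfrac1{10}d+\tfrac1{20}d=\tfrac15 d,
\]
a contradiction. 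No hot points, no $2$-Lipschitz estimates, no case analysis are needed.
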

\begin{proof}
Assuming the converse, consider
 $x,x'{\in}(x_-,x_+)$,
such that\hfil\penalty-10000
$|\gamma(x)$-$\gamma'(x')|_W{\leqslant}{1\over10}d$.\hfil\penalty-10000
Without loss of generality we can assume that\begin{picture}(0,0)(-87,39)
\put(-40,30){\circle*{3}}
\put(40,30){\circle*{3}}
\put(-38,33){\makebox(0,0)[cb]{$\gamma'(x_-)$}}
\put(40,33){\makebox(0,0)[cb]{$\gamma'(x_+)$}}
\put(-45,30){\line(1,0){90}}
\put(10,28){\makebox(0,0)[ct]{$\gamma'(x')$}}
\put(10,30){\circle*{3}}
\put(-40,30){\line(0,-1){60}}
\put(40,30){\line(0,-1){60}}
\put(-45,-30){\line(1,0){90}}
\put(-38,-33){\makebox(0,0)[ct]{$\gamma(x_-)$}}
\put(-40,-30){\circle*{3}}
\put(-20,-28){\makebox(0,0)[cb]{$\gamma(x)$}}
\put(-20,-30){\circle*{3}}
\put(38,-33){\makebox(0,0)[ct]{$\gamma(x_+)$}}
\put(40,-30){\circle*{3}}
\end{picture}\hfil\penalty-10000
 $x\leqslant x'$.
Since $\gamma(0){=}\gamma'(0)$, the $\triangle$-inequality gives:\hfil\penalty-10000
$x'{=}|\gamma'(0)$-$\gamma'(x')|{\leqslant}|\gamma(0)$-$\gamma(x)|{+}|\gamma(x)$-$\gamma'(x')|_W{\leqslant}x{+}{1\over10}d$.\hfil\penalty-10000
So $x'{-}x{\leqslant}{1\over10}d$. For $y{\leftrightharpoons}\Frac{x{+}x'}2$ we have:\hfil\penalty-10000
$y{-}x={1\over2}(x'{-}x)\leqslant{1\over20}d$,
 $x'{-}y\leqslant{1\over20}d$.\hfil\penalty-10000
By the $\triangle$-inequality,\hfil\penalty-10000
${1\over5}d<\widetilde{\s w}_\beta(y)=|\gamma(y)$-$\gamma'(y)|_W\leqslant$\hfil\penalty-10000
$\leqslant
|\gamma(y)$-$\gamma(x)|{+}|\gamma(x)$-$\gamma'(x')|_W{+}|\gamma'(x')$-$\gamma'(y)|\leqslant{1\over20}d{+}{1\over10}d{+}{1\over20}d={1\over5}d$ which is a conradiction.\end{proof}

\begin{prop} $\s{dist}_W([\gamma(x_-),\gamma'(x_-)],[\gamma(x_+),\gamma'(x_+)])>{1\over2}|I|$.\end{prop}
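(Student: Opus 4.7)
The plan is to argue by contradiction: suppose there exist points $u{\in}[\gamma(x_-),\gamma'(x_-)]$ and $v{\in}[\gamma(x_+),\gamma'(x_+)]$ with $|u$-$v|_W\leqslant\frac{1}{2}|I|$, and derive a contradiction with $|I|>\frac{4}{5}d$ from Proposition~\ref{p9}.

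Two preliminary observations are needed. First, by continuity of $\widetilde{\s w}_\beta$ and the definition of the elements of $\Cal A_d(\beta)$ as connected components of the open set $\{\widetilde{\s w}_\beta>\frac{1}{5}d\}$ (intersected with the $J_x$'s), the endpoints $x_\pm$ of $I$ satisfy $\widetilde{\s w}_\beta(x_\pm)=\frac{1}{5}d$; hence the ``vertical'' sides $V_\pm\leftrightharpoons[\gamma(x_\pm),\gamma'(x_\pm)]$ of $Q_I$, chosen as $W^1$-geodesics, have length exactly $\frac{1}{5}d$. Second, since $\gamma$ and $\gamma'$ are geodesic in $S$ and the labelling map $\kappa$ is distance-non-increasing and restricts to the identity on the images of $\gamma,\gamma'$, they are also geodesics in $W^1$; in particular $|\gamma(x_-)$-$\gamma(x_+)|_W=|\gamma'(x_-)$-$\gamma'(x_+)|_W=|I|$.

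Parameterize $u$ and $v$ by $\alpha\leftrightharpoons|u$-$\gamma(x_-)|_W$ and $\eta\leftrightharpoons|v$-$\gamma(x_+)|_W$, so that $|u$-$\gamma'(x_-)|_W=\frac{1}{5}d-\alpha$ and $|v$-$\gamma'(x_+)|_W=\frac{1}{5}d-\eta$. Splicing the hypothetical short path from $u$ to $v$ with the pieces of $V_\pm$ going to $\gamma(x_-)$ and $\gamma(x_+)$ yields a $W^1$-path from $\gamma(x_-)$ to $\gamma(x_+)$ of length $\leqslant\alpha+\frac{1}{2}|I|+\eta$; as this joins the endpoints of the $\gamma$-geodesic of length $|I|$, the triangle inequality in $W^1$ forces $\alpha+\eta\geqslant\frac{1}{2}|I|$. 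The symmetric splicing on the $\gamma'$-side gives $(\frac{1}{5}d-\alpha)+(\frac{1}{5}d-\eta)\geqslant\frac{1}{2}|I|$, i.e.\ $\alpha+\eta\leqslant\frac{2}{5}d-\frac{1}{2}|I|$. Adding the two bounds yields $|I|\leqslant\frac{2}{5}d$, which contradicts $|I|>\frac{4}{5}d$.

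The only mildly delicate point in the plan is the first preliminary observation, namely that $|V_\pm|$ equals $\frac{1}{5}d$ precisely; everything else is a clean double application of the triangle inequality in $W^1$, fully symmetric in $\gamma$ and $\gamma'$.
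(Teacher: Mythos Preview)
Your argument is correct and follows the same overall strategy as the paper (assume a short path between the two lateral sides and contradict Proposition~\ref{p9} via the triangle inequality in $W^1$), but you work harder than necessary. The paper uses only \emph{one} side: with $y_-$ on $[\gamma(x_-),\gamma'(x_-)]$ and $y_+$ on $[\gamma(x_+),\gamma'(x_+)]$ it simply bounds $|\gamma(x_-)$-$y_-|_W$ and $|y_+$-$\gamma(x_+)|_W$ by the full lateral length $\tfrac{1}{5}d$, so
$|I|=|\gamma(x_-)$-$\gamma(x_+)|_W\leqslant\tfrac{1}{5}d+\tfrac{1}{2}|I|+\tfrac{1}{5}d$, giving $|I|\leqslant\tfrac{4}{5}d$ --- already enough to contradict $|I|>\tfrac{4}{5}d$. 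Your symmetric two-sided parametrization yields the sharper $|I|\leqslant\tfrac{2}{5}d$, but that extra strength is not used anywhere. Note also that your ``mildly delicate point'' evaporates: both arguments need only $\widetilde{\s w}_\beta(x_\pm)\leqslant\tfrac{1}{5}d$, not equality, and this holds whether $x_\pm$ is an interior boundary point of a $J_x$ (where continuity gives equality) or an endpoint of $\s I_\beta$ (where the bigon condition gives $\widetilde{\s w}_\beta\leqslant1$).
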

\begin{proof} Assuming the converse, consider a pair of\begin{picture}(0,0)(-77,40)
\put(-40,30){\circle*{3}}
\put(40,30){\circle*{3}}
\put(-38,33){\makebox(0,0)[cb]{$\gamma'(x_-)$}}
\put(40,33){\makebox(0,0)[cb]{$\gamma'(x_+)$}}
\put(-45,30){\line(1,0){90}}
\put(-37,10){\makebox(0,0)[lc]{$y_-$}}
\put(-40,10){\circle*{3}}
\put(-40,30){\line(0,-1){60}}
\put(40,30){\line(0,-1){60}}
\put(-45,-30){\line(1,0){90}}
\put(-38,-33){\makebox(0,0)[ct]{$\gamma(x_-)$}}
\put(-40,-30){\circle*{3}}
\put(38,-5){\makebox(0,0)[cr]{$y_+$}}
\put(40,-5){\circle*{3}}
\put(38,-33){\makebox(0,0)[ct]{$\gamma(x_+)$}}
\put(40,-30){\circle*{3}}
\end{picture}\hfil\penalty-10000
points
$y_-{\in}[\gamma(x_-),\gamma'(x_-)]$, $y_+{\in}[\gamma(x_+),\gamma'(x_+)]$,\hfil\penalty-10000
such that
 $|y_-$-$y_+|_W\leqslant{1\over2}|I|=\Frac{x_+{-}x_-}2$.

By the $\triangle$-inequality,\hfil\penalty-10000
 $|I|{=}x_+{-}x_-{=}|\gamma(x_-)$-$\gamma(x_+)|\leqslant$\hfil\penalty-10000
$\leqslant|\gamma(x_-)$-$y_-|_W{+}|y_-$-$y_+|_W{+}|y_+$-$\gamma(x_+)|_W\leqslant$\hfil\penalty-10000
${1\over5}d{+}{1\over2}|I|{+}{1\over5}d$,
since\hfil\penalty-10000
 $|\gamma(x_-)$-$y_-|_W\leqslant|\gamma(x_-)$-$\gamma'(x_-)|_W={1\over5}d$, and,\hfil\penalty-10000
similarly,
$|\gamma(x_+)$-$y_+|_W\leqslant{1\over5}d$.
Thus, $|I|\leqslant{4\over5}d$,
 contradicting Proposition \ref{p9}.\end{proof}

By the $\square$-inequality we have:\hfil\penalty-10000
\sc Corollary\sl. For every $I{\in}\Cal A_d(\beta)$ one has
$\s{a}Q_I>\omega{\cdot}{1\over10}d{\cdot}{1\over2}|I|$ where $\omega$ depends only on \rm$\Cal P$.

\subsection{A lemma about segments}
For a finite set $\Cal A$ of bounded segments${\subset}\Bbb R$, denote
$\|\Cal A\|{\leftrightharpoons}\sum_{I\in\Cal A}|I|$.

\textbf{Lemma.}\textit{
Let $\Cal A$ be a finite set of open bounded mutually disjoint segments
 ${\subset}\Bbb R$ of length ${>}a{>}0$. There exists a set $\Cal{B{\subset}A}$ such that
$\|\Cal B\|{\geqslant}{1\over3}\|\Cal A\|$ and the distance between any two segments ${\in}\Cal B$
is greater than $a$.}
\begin{proof}
Induction on $m{\leftrightharpoons}\#\Cal A$.
If $m{\leqslant}1$ then, trivally, one takes $\Cal{B{=}A}$.

Suppose that $m{\geqslant}2$ and that, for the smaller values of $m$, the assertion is true.

Let $I$ be a segment${\in}\Cal A$ of maximal length and let\hfil\penalty-10000
 $\Cal D{\leftrightharpoons}\{D{\in}\Cal A{\setminus}\{I\}:\s{dist}(I,D){\leqslant}a\}$.
It follows from the hypothesis that the cardinality of
$\Cal D$ is at most 2.
Since the length of $I$ is maximal
$|I|{\geqslant}{1\over3}\|(\Cal D{\cup}\{I\})\|$.
By the induction assumption, the set $\Cal A'{\leftrightharpoons}\Cal A{\setminus}((\Cal D{\cup}\{I\}))$
contains a subset $\Cal B'$ such that $\|\Cal B'\|{\geqslant}{1\over3}\|\Cal A'\|$ and
there is no pairs in $\Cal B'$ of distance ${\leqslant}a$ between its elements.

For the set $\Cal B{\leftrightharpoons}\Cal B'{\cup}\{I\}$, one has
$\|\Cal B\|{\geqslant}{1\over3}\|\Cal A\|$ and there is no ``$a$-close'' pairs${\subset}\Cal B$.\end{proof}
\subsection{End of the proof of Theorem 2: an estimate for the Van Kampen area of $\beta$}
By the corollary of Section 6.2, we have $\s{a}Q_I>\omega{\cdot}{1\over20}d|I|$ for
each segient $I{\in}\Cal A_d$.

By the lemma of Section 6.3, there exists $\Cal B{\subset}\Cal A_d$ such that
$\|\Cal B\|{\geqslant}{1\over3}\|\Cal A_d\|$. So  the distance between any two distinct
segments${\in}\Cal B$ is greater than ${4\over5}d$ (see Proposition \ref{p9}).

Since the ``lateral'' sides of each qurdrilateral $Q_I$, $I{\in}\Cal A_d$ have length ${1\over5}d$,
the lateral sides of distinct quadrilaterals $Q_I,Q_J$ ($I,J{\in}\Cal B$) have empty intersection (see the picture below).\par
\begin{picture}(0,120)(-1,-31)
\put(-14,-70){
\pdfximage{\mpPath 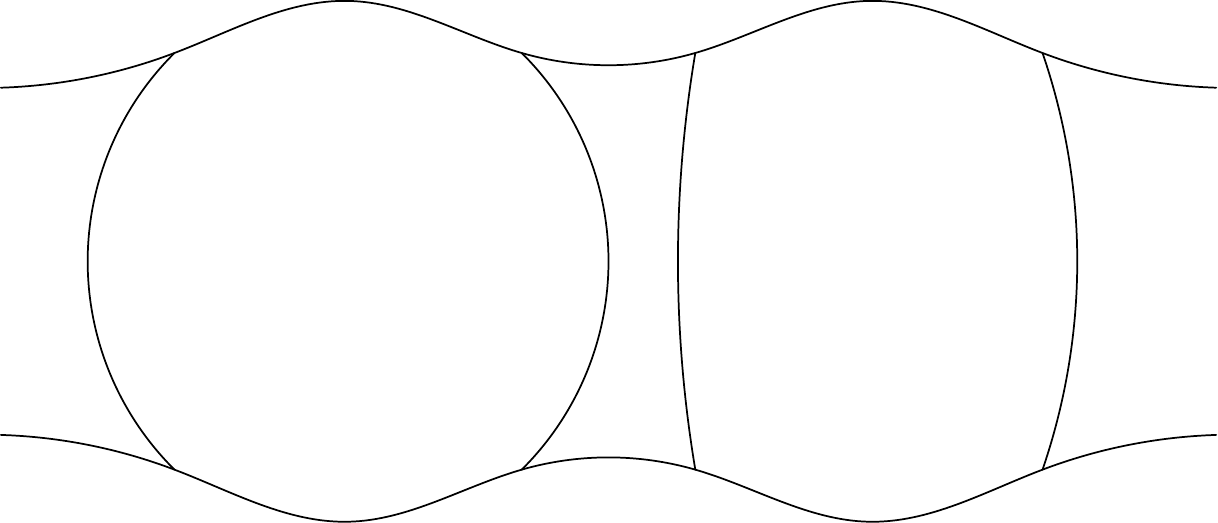}\pdfrefximage\pdflastximage}
\put(91,8){\makebox(0,0)[cc]{$Q_I$}}
\put(245,8){\makebox(0,0)[cc]{$Q_J$}}
\put(167,68){\makebox(0,0)[cc]{$\gamma$}}
\put(167,-59){\makebox(0,0)[cc]{$\gamma'$}}
\put(90,-78){\vector(1,0){51}}
\put(92,-78){\vector(-1,0){52}}
\put(91,-85){\makebox(0,0)[cc]{$I$}}
\put(244,-78){\vector(1,0){49}}
\put(246,-78){\vector(-1,0){57}}
\put(245,-85){\makebox(0,0)[cc]{$J$}}
\end{picture}
\vskip60pt
By the Jordan curve theorem, the pieces of the Van Kampen diagram bounded by $Q_I$
 and $Q_J$ have empty intersection.

Thus $\s{a}(\beta){\geqslant}\sum_{I\in\Cal B}\s{a}(Q_I)>\omega{\cdot}{1\over20}d{\cdot}\|\Cal B\|\geqslant
\omega{\cdot}{1\over60}d\|\Cal A_d\|\overset{(7),(8)}\geqslant{1\over60}\omega\varepsilon d{\cdot}|\beta|$.\hfil\penalty-10000
So $\Frac{\s{a}(\beta)}{|\beta|}>{1\over60}\omega\varepsilon d$.

Since the parameter $d$ can take arbitrary large values we have
a contradiction with the assumption of Theorem 2.

Theorem 2 is proved.

\vskip3pt
\sc Remark\rm. Theorem 2 can be stated for arbitrary graphs (not only for Cayley graphs of groups).
Instead of the Van Kampen area one can use the function
$I(P,\rho)$ of \cite{Bo}. Our proof is valid in this case too.

\end{document}